\documentclass[english]{article}
\usepackage[T1]{fontenc}
\usepackage[latin9]{inputenc}
\usepackage{geometry}
\geometry{verbose,tmargin=2cm,bmargin=2cm,lmargin=2cm,rmargin=2cm}
\usepackage{amsmath}
\usepackage{amssymb}
\usepackage[authoryear]{natbib}

\makeatletter
%
\usepackage{amsfonts}
\usepackage{graphicx}

\usepackage{amsfonts}
\usepackage{graphicx}
\usepackage{amsthm}

\setcounter{MaxMatrixCols}{10}

\theoremstyle{definition}
\newtheorem{theorem}{Theorem}
\newtheorem{corollary}{Corollary} 
\newtheorem{lemma}{Lemma}
\newtheorem{proposition}{Proposition}
\newtheorem{assumption}{Assumption}
\newtheorem{definition}{Definition} 
\theoremstyle{remark}
\newtheorem{example}{Example} 
\newtheorem{remark}{Remark}

\makeatother

\usepackage{babel}
\begin{document}

\title{Brownian semistationary processes and related processes}

\author{Orimar Sauri \\
Department of Mathematics and CREATES\\
 Aarhus University\\
 osauri@math.au.dk }

\date{\today }

\maketitle
 
\begin{abstract}
In this paper we find a pathwise decomposition of a certain class
of Brownian semistationary processes ($\mathcal{BSS}$) in terms of
fractional Brownian motions. To do this, we specialize in the case
when the kernel of the $\mathcal{BSS}$ is given by $\varphi_{\alpha}\left(x\right)=L\left(x\right)x^{\alpha}$
with $\alpha\in(-1/2,0)\cup(0,1/2)$ and $L$ a continuous function
slowly varying at zero. We use this decomposition to study some path
properties and derive Itô's formula for this subclass of $\mathcal{BSS}$
processes.
\end{abstract}
\textbf{Keywords:} Brownian semistationary processes, fractional Brownian
motion, stationary processes, Volterra processes, Itô's formula. 
%


\section{Introduction}

During recent years, \textit{Brownian semistationary processes} ($\mathcal{BSS}$)
and their tempo-spatial analogues, \textit{Ambit fields}, have been
widely studied in the literature. $\mathcal{BSS}$ processes were
originally introduced in \cite{BNSch09} as a model for the temporal
component of the velocity field of a turbulent fluid. They form a
class of processes analogous to that of Brownian semimartingales in
the stationary context. Specifically, $\mathcal{BSS}$ constitute
the family of stochastic processes which can be written as
\[
Y_{t}=\theta+\int_{-\infty}^{t}g\left(t-s\right)\sigma_{s}dB_{s}+\int_{-\infty}^{t}q\left(t-s\right)a_{s}ds,\text{ \ \ }t\in\mathbb{R}\text{,}
\]
where $\theta\in\mathbb{R}$, $B$ is a Brownian motion, $g$ and
$q$ are deterministic functions such that $g\left(x\right)=q\left(x\right)=0$
for $x\leq0$, and $\sigma$ and $a$ are predictable processes. A
very remarkable property is that $\mathcal{BSS}$\ are not semimartingales
in general. A natural extension, the so-called \textit{Lévy semistationary
processes} ($\mathcal{LSS}$), is obtained by replacing $B$ by a
Lévy process. Note that the class $\mathcal{LSS}$ has been used as
models for energy spot prices in \cite{BNBEnthVeraat13}, \cite{BenthEyjolVeraart14},
\cite{Veraart14} and \cite{Benedsen15}.

We would like to emphasize that any $\mathcal{LSS}$ is a null-space
\textit{Ambit field}: for every $t\in\mathbb{R}$ and $x\in\mathbb{R}^{d}$
an Ambit field follows the equation 
\[
Y_{t}\left(x\right):=\theta+\int_{A_{t}\left(x\right)}g\left(t,x;y,s\right)\sigma_{s}\left(y\right)Z\left(dy,ds\right)+\int_{D_{t}\left(x\right)}q\left(t,x;y,s\right)a_{s}\left(y\right)dyds,
\]
where $Z$ is a Lévy basis on $\mathbb{R}^{d+1}$, $g$ and $q$ are
deterministic functions and $\sigma$ and $a$ are suitable stochastic
fields for which the integral exists. Furthermore, the \textit{Ambit
sets} satisfy that $A_{t}\left(x\right),D_{t}\left(x\right)\subset\left(-\infty,t\right]\times\mathbb{R}^{d}$.
For surveys related to the theory and applications of Ambit fields, we refer to \cite{Podolskij15}, \cite{BNBEnthVeraat15} and \cite{BNHedSchSzoz16}. See also \cite{BNBEnthVeraat11}, \cite{BNBEnthVeraat14}
and \cite{Pakkanen14}.

In this paper we consider the subclass of $\mathcal{BSS}$ given
by
\begin{equation}
Y_{t}:=\int_{-\infty}^{t}\varphi_{\alpha}\left(t-s\right)\sigma_{s}dB_{s},\text{ \ \ }t\in\mathbb{R}\text{,}\label{eqnchap11.1}
\end{equation}
where 
\begin{equation}
\varphi_{\alpha}\left(x\right):=L\left(x\right)x^{\alpha},\text{ \ \ }x>0,\label{eqnchap11.2}
\end{equation}
$\alpha\in(-1/2,0)\cup(0,1/2)$ and $\sigma$ is an adapted stationary
càglàd process. The function $L$ is continuous and slowly varying
at $0$, where the latter means that for every $t>0,$ $\lim_{x\downarrow0}L\left(tx\right)/L\left(x\right)=1$.

Kernels of the type of (\ref{eqnchap11.2}) are building blocks for
the standard class of $\mathcal{BSS}$/$\mathcal{LSS}$ considered
in the literature. For example, most of the existing limit theory
for $\mathcal{BSS}$ and $\mathcal{LSS}$ processes is developed under
this framework. See \cite{CorcueraHedPakkPod13} and \cite{BasseHeinrPod17}.
We refer also to \cite{BNCorcPodols11}, \cite{BNCorcPodol13}, \cite{GartPod15}
and \cite{BasseLachPod17}. $\mathcal{BSS}$ processes of the type
of (\ref{eqnchap11.1}) have also proved to be a potential model for
turbulent time series as it was shown in \cite{MarquezSch16}, cf.
\cite{BNSch09}. Furthermore, \cite{BenLundPakk16} have shown that
processes of the kind of (\ref{eqnchap11.1}) can be used as a parsimonious
way of decoupling the short and long term behavior of time series
in terms of $\alpha$ and $L$, respectively 

It was noticed in \cite{BN12}, cf. \cite{CorcueraHedPakkPod13},
that in the case when $L\left(x\right)=e^{-\lambda x},$ the second
order structure of the increments of $Y$ behaves as that of a fractional
Brownian motion with Hurst parameter $H=\alpha+1/2$. A generalization in this direction
that includes certain subclass of functions of the form of (\ref{eqnchap11.2})
can be found in \cite{BenLundPakk16A}. Such a behaviour means that,
in the second order sense, the fractional Brownian motion (fBm for
short) only differs from this kind of $\mathcal{BSS}$ by a process
of bounded variation. The main goal of this paper is to show that,
up to a modification, $Y$ only differs from a fBm by an absolutely
continuous process. To do this, we study a Wiener-type stochastic
integral with respect to \textit{volatility modulated Volterra processes
on the real line (}$\mathcal{VMVP}$)\textit{.} Let us remark that
this class of integrals can be seen as a particular case of those
introduced in \citet{AlosMazerNualart01}, \cite{BNBEnthPedVeraat14}
and \cite{Mocioalca04}, for the case of non-random integrands. Finally,
as a way to show the potential of our main result, we derive some
path properties of $Y$ and, based on the existing literature of stochastic
calculus for the fBm, we establish Itô's formulae for $Y$.

The paper is organized as follows: Section 2 introduces the basic
notation and definitions. Our basic examples are also introduced in that section and we finish by giving some basic properties of fractional
Brownian motions. In Section 3, we introduce the class of $\mathcal{VMVP}$
on the real line and, by similar heuristic arguments as in \cite{BNBEnthPedVeraat14},
cf. \cite{Mocioalca04}, we define Wiener-type stochastic integrals.
We show that in the case of bounded variation integrands, such integral
operator is just of the Lebesgue-Stieltjes type. By using this, we
study the class of solutions of the associated Langevin equation.
Section 4 provides a pathwise decomposition of $\mathcal{BSS}$ processes
admitting the representation of (\ref{eqnchap11.1}), as a sum of
an absolutely continuous process and the so-called volatility modulated
fractional Brownian motion. We end the section by establishing Itô's
formulae in the case when $1/2>\alpha>0$. In Section 5 we resume
our findings. We have also added an appendix which includes some technical
results needed in the paper.

\section{Preliminaries and basic results}

Throughout this paper $\left(\Omega,\mathcal{F},\left(\mathcal{F}_{t}\right)_{t\in\mathbb{R}},\mathbb{P}\right)$
denotes a filtered probability space satisfying the usual conditions
of right-continuity and completeness. We will further assume that
$\left(\Omega,\mathcal{F},\left(\mathcal{F}_{t}\right)_{t\in\mathbb{R}},\mathbb{P}\right)$
is rich enough to support a two-sided Brownian motion $\left(B_{t}\right)_{t\in\mathbb{R}}$,
that is, $B$ is a continuous centered Gaussian process satisfying
that $B_{0}=0$ and for any $t\geq0$ and $s\in\mathbb{R}$, the random
variable $B_{t+s}-B_{s}$ has variance $t$, is $\mathcal{F}_{t+s}$-measurable
and is independent of $\mathcal{F}_{s}.$ A stochastic process $(Z_{t})_{t\in\mathbb{R}}$
is said to be an\textit{ increment semimartingale} on $\left(\Omega,\mathcal{F},\left(\mathcal{F}_{t}\right)_{t\in\mathbb{R}},\mathbb{P}\right)$,
if the process $\left(Z_{t+s}-Z_{s}\right)_{t\geq0}$ is a semimartingale
on $\left(\mathcal{F}_{t+s}\right)_{t\geq0}$ in the usual sense,
for any $s\in\mathbb{R}$. For a detailed study of increment semimartingales,
included a stochastic integration theory, the reader can see \citet{BasseGravPed13},
cf. \citet{BasseGravPed10}. 

\subsection{Existence of $\mathcal{BSS}$ processes and its stationary structure}

In this part we briefly discuss necessary and sufficient conditions
for which the stochastic integral in (\ref{eqnchap11.1}) exists in
the Itô's sense.

Observe that, for any predictable process $\phi$, the stochastic
integral $\int_{\mathbb{R}}\phi_{s}dB_{s}$ exists if and only if
almost surely $\int_{\mathbb{R}}\phi_{s}^{2}ds<\infty$, see for instance
the paper of \cite{BasseGravPed13}. Therefore, the process $Y$ in
(\ref{eqnchap11.1}) is well defined if and only if for every $t\in\mathbb{R}$,
a.s. $\int_{0}^{\infty}\varphi_{\alpha}^{2}\left(s\right)\sigma_{s+t}^{2}ds<\infty$.
Since $\sigma$ is assumed to be càglàd, then its paths are bounded
in compact sets almost surely. Thus, there exists a positive (random)
constant $M_{t}>0$ such that with probability $1$ it holds that
\[
\int_{0}^{1}\varphi_{\alpha}^{2}\left(s\right)\sigma_{s+t}^{2}ds\infty\leq M_{t}\int_{0}^{1}\varphi_{\alpha}^{2}\left(s\right)ds<\infty.
\]
Hence, $Y$ is well defined if and only if $\int_{1}^{\infty}\varphi_{\alpha}^{2}\left(s\right)\sigma_{s+t}^{2}ds<\infty$
almost surely. Thus if $\sigma$ is bounded in $\mathcal{L}^{2}\left(\Omega,\mathcal{F},\mathbb{P}\right)$,
that is $\sup_{t}\mathbb{E}(\sigma_{t}^{2})<\infty$, then $Y$ is
well defined if $\int_{1}^{\infty}\varphi_{\alpha}^{2}\left(s\right)ds<\infty$.
For general conditions on the existence of $Y$ in the non-Gaussian
case, see \citet{Basse13} and \citet{PedSau15}.

It is well known that if $\sigma$ and $B$ are independent and $\sigma$
is weakly (strongly) stationary, then $Y$ is weakly (strongly) stationary
as well. Furthermore, its autocovariance function is given by 
\begin{equation}
\gamma\left(h\right)=\mathbb{E}\left(\sigma_{0}^{2}\right)\int_{0}^{\infty}\varphi_{\alpha}\left(s\right)\varphi_{\alpha}\left(s+h\right)ds.\label{eq:autocovBSS}
\end{equation}
In applications, $\sigma$ might have a certain dependence structure
on $B$, so the imposed independence assumption does not seem to be
general enough. Nevertheless, as the following result shows, we can
drop the independence among $\sigma$ and $B$ but the price to pay
is that we cannot have a closed formula for its autocovariance structure
anymore. For a proof see the Appendix.

\begin{proposition} \label{stationarity}Let $\phi$ be a square
integrable function. Assume that for all $h\in\mathbb{R}$ 
\begin{equation}
\left(\sigma_{s+h},B_{s}^{h}\right)_{t\in\mathbb{R}}\overset{d}{=}\left(\sigma_{s},B_{s}\right)_{t\in\mathbb{R}},\label{eqnchap13.1}
\end{equation}
where 
\[
B_{t}^{h}:=B_{t+h}-B_{h}\text{, \ \ }t\in\mathbb{R}\text{,}
\]
and $\sigma$ is a stationary càglàd adapted process. Then the $\mathcal{BSS}$
given by
\[
Y_{t}:=\int_{-\infty}^{t}\phi\left(t-s\right)\sigma_{s}dB_{s}\text{, \ 		\ }t\in\mathbb{R}\text{,}
\]
is stationary in the strong sense. \end{proposition}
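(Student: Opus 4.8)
The plan is to prove strong stationarity in the direct form: for every fixed $h \in \mathbb{R}$ the shifted process $(Y_{t+h})_{t \in \mathbb{R}}$ has the same law (as a process, hence the same finite-dimensional distributions) as $(Y_t)_{t \in \mathbb{R}}$. The engine of the argument is a deterministic change of variables inside the It\^o integral that rewrites $Y_{t+h}$ in terms of the shifted pair $(\sigma_{\cdot+h}, B^h)$, combined with the distributional invariance hypothesis (\ref{eqnchap13.1}).

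First I would perform the substitution $s = u + h$ in
\[
Y_{t+h} = \int_{-\infty}^{t+h} \phi(t+h-s)\,\sigma_s\, dB_s.
\]
Since $B_{u+h} = B_h + B_u^h$ and $B_h$ does not depend on $u$, we have $dB_{u+h} = dB_u^h$, so
\[
Y_{t+h} = \int_{-\infty}^{t} \phi(t-u)\, \sigma_{u+h}\, dB_u^h.
\]
Here $B^h$ is again a Brownian motion, now with respect to the shifted filtration $(\mathcal{F}_{t+h})_{t \in \mathbb{R}}$, and since $\sigma$ is adapted to $(\mathcal{F}_t)$, the integrand $\sigma_{\cdot + h}$ is adapted to this shifted filtration; hence the right-hand side is a legitimate It\^o integral against $B^h$.

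Next I would view the whole family $(Y_{t+h})_{t}$ as the image under a single deterministic map $\Phi$ of the pair $(\sigma_{\cdot+h}, B^h)$: namely $\Phi$ sends an adapted integrand-integrator pair $(\psi, W)$, with $W$ a Brownian motion, to the path $t \mapsto \int_{-\infty}^t \phi(t-u)\,\psi_u\, dW_u$. Crucially $\Phi$ is the \emph{same} map whether applied to $(\sigma_{\cdot+h}, B^h)$ or to $(\sigma, B)$, because the construction of the stochastic integral does not depend on the particular filtered space. Applying $\Phi$ to the two pairs and using that, by (\ref{eqnchap13.1}), $(\sigma_{\cdot+h}, B^h) \overset{d}{=} (\sigma, B)$ as processes, I would conclude $(Y_{t+h})_t = \Phi(\sigma_{\cdot+h}, B^h) \overset{d}{=} \Phi(\sigma, B) = (Y_t)_t$, which is strong stationarity.

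The hard part will be making the appeal to the measurable functional $\Phi$ rigorous, since the It\^o integral is defined only as a limit in probability, not pathwise. I would handle this by approximating $\sigma$ by simple predictable integrands $\sigma^{(n)}$, for which $\Phi$ reduces to a genuine pathwise functional, a finite sum of products of integrand values and Brownian increments, and is therefore manifestly a deterministic measurable map applied identically to both pairs; the distributional equality (\ref{eqnchap13.1}) then transfers to these approximations verbatim. To pass to the limit I would invoke the integrability condition $\int_0^\infty \phi^2(s)\,\sigma_{s+t}^2\, ds < \infty$, which holds almost surely for both pairs because it is preserved under the equality of laws, and which guarantees convergence in probability of the approximating integrals on each side. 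Since convergence in probability implies convergence in distribution and equality of laws is preserved under such limits, the equality survives the passage to the limit; some care is also needed to ensure the convergence holds jointly over finitely many times $t_1, \dots, t_n$, so that one obtains equality of finite-dimensional distributions and not merely of one-dimensional marginals.
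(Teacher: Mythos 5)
Your proposal is correct and takes essentially the same route as the paper: the change of variables $Y_{t+h}=\int_{-\infty}^{t}\phi(t-u)\,\sigma_{u+h}\,dB^{h}_{u}$ justified by approximation with simple predictable integrands, followed by transferring the law of the pair $(\sigma_{\cdot+h},B^{h})$ via the hypothesis and passing to the limit in probability jointly over finitely many time points. The only detail worth flagging is that, to make the integral a genuinely pathwise functional of finitely many increments, the paper also discretizes the deterministic kernel $\phi$ by step functions in addition to discretizing $\sigma$; your plan should include that extra (routine) approximation layer.
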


\subsection{Basic examples}

In this part we introduce two important subclasses of $\mathcal{BSS}$ of the type of (\ref{eqnchap11.1}).
\subsubsection*{$\mathcal{BSS}$ with a gamma kernel}

Consider the case when $\varphi_{\alpha}$ can be expressed as a gamma
density, this is
\begin{equation}
\varphi_{\alpha}\left(x\right)=\frac{\lambda^{\alpha+1}}{\Gamma\left(\alpha+1\right)}e^{-\lambda x}x^{\alpha}\text{,}\qquad\alpha>-1/2\text{, }x>0\text{.}\label{8.3.3.0}
\end{equation}
The distinctive property of $\varphi_{\alpha}$ is that it solves,
thou not uniquely, the so-called causal covariance equation 
\begin{equation}
\int_{0}^{\infty}\varphi_{\alpha}\left(x+h\right)\varphi_{\alpha}\left(x\right)\mathrm{d}x=\frac{2^{-\alpha+1/2}}{\Gamma\left(\alpha+1/2\right)}\overline{K}_{\alpha+1/2}\left(\lambda h\right),\qquad h\geq0\text{,}\label{8.3.3.1}
\end{equation}
where $\overline{K}_{\nu}\left(u\right)=u^{\nu}K_{\nu}\left(u\right)$,
with $K_{\nu}$ the modified Bessel function of the second kind with
parameter $\nu$. The function on the right-hand side of (\ref{8.3.3.1})
is known as the {\it Whittle-Mat\'ern} covariance function. For a
historical review on this class of autocovariance functions we refer
to \cite{GuttorpGneiting05} .

It is clear that $\int_{1}^{\infty}\varphi_{\alpha}^{2}\left(s\right)ds<\infty$,
meaning that the associated $\mathcal{BSS}$ process is well defined
and its autocovariance function, in the case when $\sigma$ is independent
of $B$, is proportional to that in (\ref{8.3.3.1}). We will refer
to $Y$ as an $\mathcal{BSS}$ \textit{with a gamma kernel}. This
subclass of Brownian semistationary processes, as we will show later,
are closely related with fractional Ornstein-Uhlenbeck processes.
We would like to emphasize that $Y$ has successfully studied and
applied in the fields of econometrics, finance and turbulence. See
for instance \cite{BNSch09}, \cite{BNCorcPodols11}, \cite{Pakkanen11},
\cite{BNBEnthVeraat13}, and \citet{PedSau15}.

It was shown in \cite{BNSch09} that $Y$ is not in general a semimartingale.
More precisely, for any $\alpha\geq1/2$, $Y$ is a process of bounded
variation. However, when $\alpha\in\left(-1/2,0\right)\cup\left(0,1/2\right),$
$Y$ is no longer a semimartingale. The case $\alpha=0$ corresponds
to the classic Ornstein-Uhlenbeck process.

\subsubsection*{Power -$\mathcal{BSS}$ processes}

In \cite{BenLundPakk16A} the authors considered the $\mathcal{BSS}$
associated to the function
\[
\varphi_{\alpha}\left(x\right)=\frac{x^{\alpha}}{(1+x)^{\alpha+\beta}},\,\,\,\alpha\in(-1/2,0)\cup(0,1/2),\beta>1/2,
\]
and they referred to it as a {\it power}-$\mathcal{BSS}$ process.
A remarkable property of this family of $\mathcal{BSS}$ processes
is that, on the one hand, the parameter $\alpha$ controls the path
regularities of $Y$, while on the other, $\beta$ characterizes
the persistence of $Y$. More precisely, $Y$ has $p$-Hölder-continuous
paths almost surely for any $p<\alpha+1/2$ and its autocovariance
function satisfies that as $h\uparrow\infty$
\[
\gamma(h)\sim\begin{cases}
K_{\sigma,\beta,\alpha}h^{1-2\beta} & \text{if }\beta\in(1/2,1);\\
K_{\sigma,\beta,\alpha}h^{-\beta} & \text{if }\beta>1.
\end{cases}
\]
for some constant $K_{\sigma,\beta,\alpha}>0$. For a proof of these
properties see \cite{BenLundPakk16}. Therefore, $Y$ has long memory
in the case when $\beta\in(1/2,1)$, while when $\beta>1$, the process
has short memory. The case $\beta=1$ is a limit case.

\subsection{Fractional Brownian motion}

A continuous centered Gaussian process $\left(B_{t}^{H}\right)_{t\in\mathbb{R}}$
is called fractional Brownian motion (fBm for short) with Hurst index
$H\in\left(0,1\right),$ if its covariance structure is given by 
\[
\mathbb{E}\left(B_{t}^{H}B_{s}^{H}\right)=\frac{c_{H}}{2}\left(\left\vert t\right\vert ^{2H}+\left\vert s\right\vert ^{2H}-\left\vert t-s\right\vert ^{2H}\right)\text{, \ \ for any }s,t\in\mathbb{R}\text{,}
\]
with $c_{H}=\int_{0}^{\infty}\left[\left(1+x\right)^{H-1/2}-x^{H-1/2}\right]dx+1/2H$.
It is well known that $B^{H}$ has (up to modification) Hölder continuous
paths of order strictly smaller than $H$ and it is a semimartingale
if and only if $H=1/2,$ i.e. when $B^{H}$ is a Brownian motion.
Furthermore, $B^{H}$ admits the following spectral representation
\begin{equation}
B_{t}^{H}=\int_{\mathbb{R}}K^{H}\left(t,s\right)dB_{s}\text{, \ \ }t\in\mathbb{R}\text{,}\label{eqnchap11.3}
\end{equation}
where 
\[
K^{H}\left(t,s\right):=\left(t-s\right)_{+}^{H-1/2}-\left(-s\right)_{+}^{H-1/2}\text{, \ \ }s,t\in\mathbb{R}\text{,}
\]
with $\left(x\right)_{+}:=\max\left\{ 0,x\right\} $. It follows from
this that $B^{H}$ is not adapted to the filtration generated by the
increments of $B$ for $t<0$ and it is a Volterra-type representation
for $t>0$. We claim that for $t\geq0$
\begin{equation}
B_{t}^{H}=A_{t}+\widetilde{B}_{t}^{H},\label{eqnchap1.4.1}
\end{equation}
where $\widetilde{B}_{t}^{H}=\int_{0}^{t}\left(t-s\right)^{H-1/2}dB_{s}$
and $A_{t}:=\int_{-\infty}^{0}K^{H}\left(t,s\right)dB_{s}$ being
a process with absolutely continuous paths. Indeed, since for any
$t\geq0$
\[
K^{H}\left(t,s\right)=(H-1/2)\int_{0}^{t}(u-s)^{H-3/2}du,\,\,\,s<0,
\]
and 
\begin{align*}
\int_{0}^{t}(\int_{-\infty}^{0}(u-s)^{2H-3}ds)^{1/2}du & =\frac{1}{H(2-2H)^{1/2}}t^{H},
\end{align*}
the Stochastic Fubini Theorem can be applied (see Theorem \ref{StFubiniThm}
in the Appendix) to deduce that for any $t\geq0$, almost surely
\[
A_{t}:=(H-1/2)\int_{0}^{t}\int_{-\infty}^{0}(u-s)^{H-3/2}dB_{s}du,
\]
meaning this that, up to a modification, the paths of $A_{t}$ are
absolutely continuous for $t\geq0$.

More generally, any process admitting the integral representation
\begin{equation}
B_{t}^{\sigma,H}=\int_{\mathbb{R}}K^{H}\left(t,s\right)\sigma_{s}dB_{s},\text{ \ \ }t\in\mathbb{R}\text{,}\label{eqnchap11.4.0}
\end{equation}
for an adapted càglàd process $\sigma$, will be referred to as a
\textit{volatility modulated fractional Brownian motion} (vmfBm for
short). Observe that by a localization argument on $\sigma$, we can
construct a modification of $B^{\sigma,H}$ which has Hölder continuous
paths of index strictly smaller that $H$. A similar decomposition
to (\ref{eqnchap1.4.1}) can be obtained in the case when $\sigma$
is bounded in $\mathcal{L}^{2}\left(\Omega,\mathcal{F},\mathbb{P}\right)$
.

\section{Wiener integrals for volatility modulated Volterra processes}

In this section, by reasoning as in \citet{Mocioalca04} as well as
in \citet{BNBEnthPedVeraat14}, we define Wiener-type stochastic integrals
with respect to volatility modulated Volterra processes on the real
line. Such an integral will play a key role in our main result.

\subsection{Definition and basic properties}

Let $K:\mathbb{R}^{2}\rightarrow\mathbb{R}$ denotes a measurable
function such that $K\left(t,s\right)=0$ for $s>t$, $\int_{-\infty}^{t}K^{2}\left(t,s\right)ds<\infty$
for any $t\in\mathbb{R}$, and the mapping $t\mapsto K(t,s)$ is continuously
differentiable on $(s,\infty)$ for almost all $s$. A stochastic
process $\left(X_{t}\right)_{t\in\mathbb{R}}$ is called \textit{volatility
modulated Volterra process} ($\mathcal{VMVP}$ for short) on the real
line if it admits the representation
\begin{equation}
X_{t}=\int_{-\infty}^{t}K\left(t,s\right)\sigma_{s}dB_{s},\text{ \ \ \ }t\in\mathbb{R}\text{,}\label{eqnchap11.9}
\end{equation}
for some adapted càglàd process that is bounded in $\mathcal{L}^{2}(\Omega,\mathcal{F},\mathbb{P})$.
Observe that $X$ is well defined because almost surely $\int_{-\infty}^{t}K\left(t,s\right)^{2}\sigma_{s}^{2}ds<\infty$.
Indeed, the $\mathcal{L}^{2}$-boundedness of $\sigma$ guarantees
the existence of a constant $M>0$ such that
\[
\mathbb{E}[\int_{-\infty}^{t}K\left(t,s\right)^{2}\sigma_{s}^{2}ds]\leq M\int_{-\infty}^{t}K^{2}\left(t,s\right)ds<\infty,\,\,\,\forall\,t\in\mathbb{R},
\]
as claimed. The function $K$ will be referred as the kernel of $X$.
For simplicity, for the rest of this paper we will assume that $K(0,s)=0$
for almost all $s$. We will also assume that $K(t,\cdot) \xrightarrow[t \to u]{}K(u,\cdot)$ in $\mathcal{L}^2(ds)$. This in particular
implies that $X$ is continuous in probability (see \cite{BasseGravPed13}) and therefore it admits a measurable modification.

Let us give some remarks:

\begin{remark} We observe the following:
\begin{enumerate}
\item In general, $X$ is not adapted to the natural filtration of $B$
but rather to the filtration generated by the increments of $B$.
\item The fractional Brownian motion can be written as the sum of an $\mathcal{VMVP}$
and an increment semimartingale. Indeed, from (\ref{eqnchap11.3}),
we have that
\[
B_{t}^{H}=\int_{-\infty}^{t}K^{H}\left(t,s\right)dB_{s}-\int_{t\land0}^{0}\left(-s\right)^{H-1/2}dB_{s},\,\,\,t\in\mathbb{R},
\]
where, as usual, the symbol $t\land0$ represents the minimum between
$t$ and $0$.
\item In general, $\mathcal{VMVP}$ are neither semimartingales nor increment
semimartingales. 
\end{enumerate}
\end{remark}

From the previous remark it follows that standard Itô's stochastic
integration cannot be applied to $\mathcal{VMVP}$. Nevertheless,
proceeding as in \citet{BNBEnthPedVeraat14}, c.f. \citealt{Mocioalca04},
we introduce a Wiener-type stochastic integral with respect to (w.r.t.
for short) $\mathcal{VMVP}$ as follows: Suppose for a moment that
$K$ is smooth enough to make $X$ a semimartingale (see general conditions
in Proposition 3 of \citet{BNBEnthVeraat13}), then by putting $dB_{t}^{\sigma}=\sigma_{t}dB_{t}$,
we have 
\begin{equation}
dX_{t}=K\left(t,t-\right)dB_{t}^{\sigma}+\int_{-\infty}^{t}\frac{\partial}{\partial t}K\left(t,s\right)dB_{s}^{\sigma}dt,\text{ \ \ \ }t\in\mathbb{R}\text{.}\label{eqnchap11.10.0}
\end{equation}
In general, $K\left(t,t-\right)$ might not be well defined, e.g.
$K$ as in (\ref{eqnchap11.3}), but even when it is, one can have
that $\frac{\partial}{\partial t}K\left(t,\cdot\right)$ is not integrable
with respect to $B^{\sigma}$. However, we can formally infer from
(\ref{eqnchap11.10.0}) that
\begin{eqnarray}
\int_{-\infty}^{t}f\left(s\right)dX_{s} & = & \int_{-\infty}^{t}\int_{-\infty}^{s}f\left(s\right)\frac{\partial}{\partial s}K\left(s,r\right)dB_{r}^{\sigma}ds+\int_{-\infty}^{t}f\left(s\right)K\left(s,s-\right)dB_{s}^{\sigma}\label{eqnchap11.10}\\
 & = & \int_{-\infty}^{t}\int_{-\infty}^{s}\left[f\left(s\right)-f\left(r\right)\right]\frac{\partial}{\partial s}K\left(s,r\right)dB_{r}^{\sigma}ds+\int_{-\infty}^{t}f\left(s\right)K\left(s,s-\right)dB_{s}^{\sigma}\nonumber \\
 &  & +\int_{-\infty}^{t}\int_{-\infty}^{s}f\left(r\right)\frac{\partial}{\partial s}K\left(s,r\right)dB_{r}^{\sigma}ds,\nonumber 
\end{eqnarray}
but, by the stochastic Fubini theorem 
\[
\int_{-\infty}^{t}\int_{-\infty}^{s}\left[f\left(s\right)-f\left(r\right)\right]\frac{\partial}{\partial s}K\left(s,r\right)dB_{r}^{\sigma}ds=\int_{-\infty}^{t}\int_{s}^{t}\left[f\left(u\right)-f\left(s\right)\right]\frac{\partial}{\partial u}K\left(u,r\right)dudB_{s}^{\sigma},
\]
and
\begin{eqnarray*}
\int_{-\infty}^{t}\int_{-\infty}^{s}f\left(r\right)\frac{\partial}{\partial s}K\left(s,r\right)dB_{r}^{\sigma}ds & = & \int_{-\infty}^{t}f\left(r\right)\left[K\left(t,r\right)-K\left(r,r-\right)\right]dB_{r}^{\sigma}.
\end{eqnarray*}
Thus, in the semimartingale case, we have that
\begin{eqnarray}
\int_{-\infty}^{t}f\left(s\right)dX_{s} & = & \int_{-\infty}^{t}\mathcal{K}_{K}f\left(t,s\right)\sigma_{s}dB_{s},\label{eqnchap11.10.1}
\end{eqnarray}
where 
\begin{equation}
\mathcal{K}_{K}f\left(t,s\right):=f\left(s\right)K\left(t,s\right)+\int_{s}^{t}\left[f\left(u\right)-f\left(s\right)\right]\frac{\partial}{\partial u}K\left(u,s\right)du\text{, \ \ \ }s<t.\label{eqnchap11.11}
\end{equation}
Note that the left-hand side of (\ref{eqnchap11.10.1}) is well defined
(remember that $\sigma$ is bounded in $\mathcal{L}^{2}(\Omega,\mathcal{F},\mathbb{P})$)
if $\mathcal{K}_{K}f(t,\cdot$) is square integrable, which a priori
does not requires that $K(t,t-)<\infty$. Thus, (\ref{eqnchap11.10.1})
gives a way to define stochastic integrals w.r.t. $\mathcal{VMVP}$
even in the non-semimartingale case. More precisely: 

\begin{definition} \label{Defintstoch2}Let $X$ be an $\mathcal{VMVP}$
as in (\ref{eqnchap11.9}) and put
\[
\mathcal{H}_{t}^{K}:=\left\{ f\text{ measurable}:\mathcal{K}_{K}f\in\mathcal{L}^{2}\left(\left(-\infty,t\right]\right)\right\} ,\text{ \ \ \ }t\in\mathbb{R}.
\]
For each $f\in\mathcal{H}_{t}^{K},$ we define the stochastic integral
of $f$ w.r.t. $X$ as
\begin{equation}
\int_{-\infty}^{t}f\left(s\right)dX_{s}:=\int_{-\infty}^{t}\mathcal{K}_{K}\left(f\right)\left(t,s\right)dB_{s}^{\sigma}\text{, \ \ \ }t\in\mathbb{R}.\label{eqnchap114}
\end{equation}
\end{definition}

\begin{remark} Observe that $\int_{-\infty}^{t}f\left(s\right)dX_{s}$
is a particular case of the one introduced in \citet{AlosMazerNualart01},
\citet{Mocioalca04}, and \citet{BNBEnthPedVeraat14} for non-random
integrands with support on $\mathbb{R}$ rather than in an interval
of the form $\left[0,T\right]$. Furthermore, $\mathcal{H}_{t}^{K}$
is a Hilbert space with inner product 
\[
\left\langle f,g\right\rangle _{\mathcal{H}_{t}^{K}}=\left\langle \mathcal{K}_{K}f,\mathcal{K}_{K}g\right\rangle _{\mathcal{L}^{2}\left(\left(-\infty,t\right]\right)}.
\]
\end{remark}

\begin{remark} We might be tempted to extend Definition \ref{Defintstoch2}
to any predictable process $Y$ by putting 
\[
\int_{-\infty}^{t}Y_{s}dX_{s}=\int_{-\infty}^{t}\mathcal{K}_{K}\left(Y\right)\left(t,s\right)dB_{s}^{\sigma}\text{.}
\]
However, under this definition, the process $\left(\mathcal{K}_{K}\left(Y\right)\left(t,s\right)\right)_{s\leq t}$
is not adapted because the random variable \linebreak $\int_{s}^{t}\left[Y_{u}-Y_{s}\right]\frac{\partial}{\partial u}K\left(u,r\right)du$
is only $\mathcal{F}_{t}$-measurable. Hence, we cannot define such
an integral in the Itô's sense, but it is feasible in the Skorohod
sense as it was done in \citet{BNBEnthPedVeraat14}. 

\end{remark}

The following example shows that there is a natural connection between
the stochastic integral defined in (\ref{eqnchap114}) and $\mathcal{BSS}$
processes of the form of (\ref{eqnchap11.1}). It is also the starting
point and motivation for our main result in the next section.

\begin{example}[$\mathcal{BSS}$ and $\mathcal{VMVP}$ ] \label{BSSgammandVMVP}For
$\alpha\in(-1/2,0)\cup(0,1/2)$ let $K$ be the Mandelbrot-Van Ness
kernel, i.e.
\begin{equation}
K\left(t,s\right)=\{\left(t-s\right)_{+}^{\alpha}-\left(-s\right)_{+}^{\alpha}\}\mathbf{1}_{\left\{ t>s\right\} }\text{,}\label{kernelmandvaness-1}
\end{equation}
and consider 
\[
f(s)=e^{\lambda s},\,\,\,s\in\mathbb{R},\lambda>0.
\]
Let us see that $\int_{-\infty}^{t}e^{\lambda s}dX_{s}$ can be defined
as in Definition \ref{Defintstoch2} for every $t$. First note that
\begin{equation}
\mathcal{K}_{K}\left(f\right)(t,s)=e^{\lambda t}K\left(t,s\right)-\lambda\int_{s}^{t}e^{\lambda u}K\left(u,s\right)du,\,\,s<t.\label{Kopexp}
\end{equation}
Moreover, due to the Jensen's inequality and Tonelli's Theorem we
have that
\[
\int_{-\infty}^{t}\left(\int_{s}^{t}e^{\lambda u}K\left(u,s\right)du\right)^{2}ds\leq\frac{e^{\lambda t}}{\lambda}\int_{-\infty}^{t}\int_{s}^{t}e^{\lambda u}K\left(u,s\right)^{2}duds=\frac{c_{\alpha}e^{\lambda t}}{\lambda}\int_{-\infty}^{t}e^{\lambda u}u^{2\alpha+1}du<\infty,
\]
with $c_{\alpha}$ a constant depending only on $\alpha$. This, together
with (\ref{Kopexp}) shows that $\int_{-\infty}^{t}e^{\lambda s}dX_{s}$
is well defined and by linearity so $\int_{-\infty}^{t}e^{-\lambda(t-s)}dX_{s}$ is.
Further, for $t\geq0$ almost surely
\begin{align}
\int_{-\infty}^{t}e^{-\lambda(t-s)}dX_{s} & -Y_{t}=e^{\lambda t}\{\int_{-\infty}^{t}\int_{s}^{t}[e^{\lambda u}-e^{\lambda s}]\frac{\partial}{\partial u}K\left(u,s\right)du\sigma_{s}dB_{s}-\int_{-\infty}^{0}e^{\lambda s}\left(-s\right)^{\alpha}\sigma_{s}dB_{s}\},\label{decompgammkernel}
\end{align}
where $Y$ is a $\mathcal{BSS}$ with a gamma kernel. Observe that
this decomposition is well defined. Our main result in the next section
states that the process in the right-hand side of (\ref{decompgammkernel})
has absolutely continuous paths almost surely. Hence, $\mathcal{BSS}$ processes with gamma kernel and $\mathcal{VMVP}$
differs only by an absolutely continuous process.

\end{example}

The previous example motivates a further understanding of the integral
introduced in Definition \ref{Defintstoch2}. Therefore, for the rest
of this section we study some properties of the integral appearing
in (\ref{eqnchap114}). Let us start with the connection between such
an integral and the Lebesgue-Stieltjes integral:

\begin{proposition} \label{proplebstiel}Let $f:\mathbb{R}\rightarrow\mathbb{R}$
be a continuous function of local bounded variation satisfying that
for almost all $s<t$
\begin{equation}
\lim_{u\downarrow s}\left[f\left(u\right)-f\left(s\right)\right]K\left(u,s\right)=0.\label{eqnchap1143}
\end{equation}
Then $f\in\mathcal{H}_{t}^{K}$ if and only if the mapping $s\mapsto\int_{s}^{t}K(r,s)df(r)$
is square integrable. If in addition we have that
\begin{equation}
\int_{-\infty}^{t}\left(\int_{-\infty}^{s}K(s,r)^2dr\right)^{1/2}d\left\vert f\right\vert \left(s\right)<\infty,\label{eqnchap1142}
\end{equation}
where $d\left\vert f\right\vert $ is the total variation of $f$,
then almost surely
\begin{equation}
\int_{-\infty}^{t}f\left(s\right)dX_{s}=f\left(t\right)X_{t}-\int_{-\infty}^{t}X_{s}df\left(s\right),\label{LebStieltHtH0}
\end{equation}
i.e. $\int_{-\infty}^{t}f\left(s\right)dX_{s}$ coincides with the
Lebesgue-Stieltjes integral of $f$ with respect to $X$.

\end{proposition}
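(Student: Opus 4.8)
The plan is to reduce the whole statement to a single integration-by-parts identity for the operator $\mathcal{K}_K$. The crucial first step is to rewrite $\mathcal{K}_K f(t,s)$ in a form that no longer involves the derivative of $K$. Fix $t$ and a.e.\ $s<t$. Since $u\mapsto K(u,s)$ is continuously differentiable on $(s,\infty)$ and $f$ is continuous of local bounded variation, I apply integration by parts on $[s+\varepsilon,t]$ to the term $\int_s^t[f(u)-f(s)]\frac{\partial}{\partial u}K(u,s)\,du$, obtaining
\[
\int_{s+\varepsilon}^t [f(u)-f(s)]\frac{\partial}{\partial u}K(u,s)\,du = [f(t)-f(s)]K(t,s) - [f(s+\varepsilon)-f(s)]K(s+\varepsilon,s) - \int_{s+\varepsilon}^t K(u,s)\,df(u).
\]
Letting $\varepsilon\downarrow 0$, the boundary term vanishes by hypothesis (\ref{eqnchap1143}), and substituting back into (\ref{eqnchap11.11}) the term $f(s)K(t,s)$ cancels, leaving the clean representation
\[
\mathcal{K}_K f(t,s) = f(t)K(t,s) - \int_s^t K(u,s)\,df(u), \qquad s<t.
\]

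Given this identity the first claim is immediate. By the standing assumption on the kernel, $K(t,\cdot)\in\mathcal{L}^2((-\infty,t])$, so $s\mapsto f(t)K(t,s)$ is square integrable; therefore $\mathcal{K}_K f(t,\cdot)\in\mathcal{L}^2((-\infty,t])$, that is $f\in\mathcal{H}_t^K$, if and only if the residual map $s\mapsto\int_s^t K(u,s)\,df(u)$ belongs to $\mathcal{L}^2((-\infty,t])$, which is exactly the asserted equivalence. For the Lebesgue--Stieltjes formula, I insert the clean representation of $\mathcal{K}_K f$ into Definition \ref{Defintstoch2} and split the stochastic integral into two:
\[
\int_{-\infty}^t f(s)\,dX_s = f(t)\int_{-\infty}^t K(t,s)\sigma_s\,dB_s - \int_{-\infty}^t \left(\int_s^t K(u,s)\,df(u)\right)\sigma_s\,dB_s.
\]
The first term equals $f(t)X_t$ by the definition of $X$ in (\ref{eqnchap11.9}). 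For the second, I recognize the inner kernel as the outcome of interchanging the order of integration in $\int_{-\infty}^t X_s\,df(s)$: writing $X_s=\int_{-\infty}^s K(s,r)\sigma_r\,dB_r$ and applying the stochastic Fubini theorem (Theorem \ref{StFubiniThm}) gives
\[
\int_{-\infty}^t X_s\,df(s) = \int_{-\infty}^t \left(\int_s^t K(u,s)\,df(u)\right)\sigma_s\,dB_s.
\]
Combining the two displays yields (\ref{LebStieltHtH0}).

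The main obstacle is the justification of this last interchange, and this is precisely the role of the extra hypothesis (\ref{eqnchap1142}). I would verify the integrability assumption of the stochastic Fubini theorem by combining (\ref{eqnchap1142}) with the $\mathcal{L}^2$-boundedness of $\sigma$: if $\mathbb{E}(\sigma_r^2)\le M$, then by Cauchy--Schwarz
\[
\mathbb{E}\int_{-\infty}^t |X_s|\,d|f|(s) \le \int_{-\infty}^t \mathbb{E}(X_s^2)^{1/2}\,d|f|(s) \le \sqrt{M}\int_{-\infty}^t \left(\int_{-\infty}^s K(s,r)^2\,dr\right)^{1/2}d|f|(s) < \infty.
\]
This finiteness serves double duty: it guarantees that the pathwise Lebesgue--Stieltjes integral $\int_{-\infty}^t X_s\,df(s)$ is a.s.\ well defined and finite, and it supplies exactly the hypothesis needed to apply the stochastic Fubini theorem. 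A minor side point to check is that the limit passage $\varepsilon\downarrow 0$ in the first paragraph is legitimate, i.e.\ that $\int_{s+\varepsilon}^t K(u,s)\,df(u)$ converges to $\int_s^t K(u,s)\,df(u)$; this is controlled by the same continuity and bounded-variation properties together with (\ref{eqnchap1143}), and causes no genuine difficulty.
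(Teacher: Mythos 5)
Your proposal is correct and follows essentially the same route as the paper: the same integration by parts yielding $\mathcal{K}_K f(t,s)=f(t)K(t,s)-\int_s^t K(u,s)\,df(u)$, the same reduction of the first claim to square integrability of the residual term, and the same stochastic Fubini interchange justified by (\ref{eqnchap1142}) together with the $\mathcal{L}^2$-boundedness of $\sigma$. The only cosmetic difference is that the paper splits $f$ into monotone parts and invokes its packaged Lemma \ref{lemmafubini}, whereas you verify the hypothesis of Theorem \ref{StFubiniThm} (in the square-integrable form of Remark \ref{remarkfubini}) directly.
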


\begin{proof} Integration by part shows that for any continuous function
of local bounded variation satisfying equation (\ref{eqnchap1143})
it holds that 
\[
\int_{s}^{t}\left[f\left(u\right)-f\left(s\right)\right]\frac{\partial}{\partial u}K\left(u,s\right)du=\left[f\left(t\right)-f\left(s\right)\right]K\left(t,s\right)-\int_{r}^{t}K\left(u,s\right)df\left(u\right),
\]
for $s<t$. Thus 
\begin{equation}
\mathcal{K}_{K}f\left(t,s\right)=f\left(t\right)K\left(t,s\right)-\int_{s}^{t}K(r,s)df(r)\text{, \ \ \ }s<t.\label{operatorH_0}
\end{equation}
The first conclusion of this proposition follows by noting that $\int_{-\infty}^{t}K^{2}\left(t,s\right)ds<\infty$
for any $t\in\mathbb{R}$ and using the fact that $\mathcal{H}_{t}^{K}$
is a linear space. Moreover, due to (\ref{operatorH_0}), in the case
when $f\in\mathcal{H}_{t}^{K}$ satisfies (\ref{eqnchap1143}) we
have that
\[
\int_{-\infty}^{t}f(s)dX_{s}=f(t)X_{t}-\int_{-\infty}^{t}\int_{s}^{t}K\left(r,s\right)df(r)dB_{s}^{\sigma}.
\]
Suppose for a moment that the stochastic Fubini theorem can be applied
to $\int_{-\infty}^{t}\int_{r}^{t}K\left(s,r\right)\sigma df(r)dB_{r}^{\sigma},$
then almost surely
\begin{eqnarray*}
\int_{-\infty}^{t}f(s)dX_{s} & = & f(t)X_{t}-\int_{-\infty}^{t}\int_{s}^{t}K\left(r,s\right)df(r)dB_{s}^{\sigma}.\\
 & = & f\left(t\right)X_{t}-\int_{-\infty}^{t}X_{r}df(r),
\end{eqnarray*}
which shows (\ref{LebStieltHtH0}). Hence, we only need to show that
under our assumptions we can swap the order of integration in $\int_{-\infty}^{t}\int_{s}^{t}K\left(r,s\right)df\left(r\right)dB_{s}^{\sigma}.$
Since $f$ is locally of bounded variation, it can be written as $f=g_{1}-g_{2},$
where $g_{1}$ and $g_{2}$ are non-decreasing functions. By \ref{eqnchap1142}, we deduce that for $i=1,2$, $\int_{-\infty}^{t}\left(\int_{-\infty}^{s}K(s,r)^2dr\right)^{1/2}dg_{i}\left(s\right)<\infty$, which, thanks to Lemma \ref{lemmafubini} in the Appendix, implies that almost surely
for $i=1,2$
\[
\int_{-\infty}^{t}X_{s}dg_{i}\left(s\right)=\int_{-\infty}^{t}\int_{r}^{t}K\left(s,r\right)dg_{i}dB_{r}^{\sigma},
\]
which completes the proof.

\end{proof}

\begin{remark}We would like to remark that since the mapping $u\mapsto\left\Vert K(t,\cdot)-K(u,\cdot)\right\Vert _{\mathcal{L}^{2}\left(ds\right)}$
is continuous, then $C_{b}^{1},$ the space of continuously differentiable
functions with compact support, belongs to $\mathcal{H}_{t}^{K}$
just as in Proposition 14 in \citet{Mocioalca04}. 

\end{remark}

\begin{remark}

Observe that a similar condition to (\ref{eqnchap1143}) was imposed
in \citealt{BNBEnthPedVeraat14} to determine the class of solutions
associated to the Langevin equation driven by an $\mathcal{VMVP}$
indexed on the positive real line. 

\end{remark}

\begin{remark} \label{RemarkLebSt}Equation (\ref{eqnchap1143})
is trivial when $f$ is continuous and $K(t+,t)$ exists for every
$t$. Furthermore, when $\int_{-\infty}^{t}d\left\vert f\right\vert(s)<\infty$,
the condition 
\begin{equation}
\int_{-\infty}^{t}\int_{-\infty}^{s}K(s,r)^2dr d\left\vert f\right\vert (s)<\infty
\end{equation}
together with (\ref{eqnchap1143}), imply that all
the conclusions of Proposition \ref{proplebstiel} hold for such $f$. Indeed, by the
Jensen's inequality and Tonelli's Theorem
\[
\left[\int_{-\infty}^{t}\left(\int_{-\infty}^{s}K(s,r)^2dr\right)^{1/2}d\left\vert f\right\vert (s)\right]^{2}\leq c_{t}\int_{-\infty}^{t}\int_{-\infty}^{s}K(s,r)^2dr d\left\vert f\right\vert (s)<\infty,
\]
as well as
\[
\int_{\infty}^{t}\left|\int_{s}^{t}K(r,s)df(r)\right|^{2}ds\leq c_{t}\int_{-\infty}^{t}\int_{s}^{t}K(r,s)^{2}d\left\vert f\right\vert \left(r\right)ds=c_{t}\int_{-\infty}^{t}\int_{-\infty}^{s}K(s,r)^2dr d\left\vert f\right\vert (s),
\]
where $c_{t}=\int_{-\infty}^{t}d\left\vert f\right\vert \left(s\right)$.

\end{remark}
The next example concentrates in the case when the kernel $K$ corresponds
to the one of the fBm.

\begin{example} \label{madelbrovanness}Let $K$ be as in Example
	\ref{BSSgammandVMVP}. Observe that when $\alpha>0$, (\ref{eqnchap1143})
	is satisfied for any continuous function. On the other hand, when
	$\alpha<0$ we have that every function of bounded variation satisfies
	(\ref{eqnchap1143}) for almost all $s\in\mathbb{R}$. Indeed, if
	$f$ is of bounded variation then its derivative exists almost everywhere,
	so pick $s\in\mathbb{R}\backslash\{0\}$ for which the derivative of $f$ exists.
	Then
	\[
	\lim_{u\downarrow s}[f\left(u\right)-f\left(s\right)]K(u,s)=\lim_{u\downarrow s}(\frac{f(u)-f(s)}{u-s})K(u,s)\left(u-s\right)=0\text{.}
	\]
	where we have used that $\frac{f(u)-f(s)}{u-s}\rightarrow f'(s)$
	and $K(u,s)\left(u-s\right)\rightarrow0$ as $u\downarrow s$. Moreover, in view that
	\[ \int_{-\infty}^{s}K(s,r)^2dr=c_\alpha s^{2\alpha +1} ,\]
	for some $c_\alpha>0$, then condition (\ref{eqnchap1142}) becomes
	\[ \int_{-\infty}^{t}s^{\alpha+1/2}d\left\vert f\right\vert (s)<\infty.\]
	In particular, if for $s\leq t$ we let 
	\begin{align*}
	f_{t,\lambda}(s):= & e^{-\lambda(t-s)},\,\,\,\lambda>0;\\
	f_{t,\beta,\alpha}(s):= & (1+t-s)^{-(\alpha+\beta)},\,\,\,\beta>1/2,
	\end{align*}
	then $f_{t,\lambda}$ and $f_{t,\beta,\alpha}$ satisfy (\ref{eqnchap1143}) and (\ref{eqnchap1142}).
	In Example \ref{BSSgammandVMVP}, we have showed that $f_{t,\lambda}\in\mathcal{H}_{t}^{K}$, meaning this that almost surely
	\begin{equation}
	\int_{-\infty}^{t}e^{-\lambda(t-s)}dX_{s}  =X_{t}-\lambda e^{-\lambda t}\int_{-\infty}^{t}e^{\lambda s}X_{s}ds.\label{langevineq}
	\end{equation}
	Later we will see that also $f_{t,\beta,\alpha}$ belongs to $\mathcal{H}_{t}^{K}$, which according to Proposition \ref{proplebstiel} lead us to the almost surely relation
	\begin{equation*}
	\int_{-\infty}^{t}(1+t-s)^{-(\alpha+\beta)}dX_{s} =X_{t}-(\alpha+\beta)\int_{-\infty}^{t}(1+t-s)^{-\alpha-\beta-1}X_{s}ds.
	\end{equation*}
	
\end{example}

Next, for the sake of completeness, we state the following result
related to the continuity of $\int_{-\infty}^{t}\cdot dX_{s}$ whose
proof is straightforward and thus omitted.

\begin{proposition} The mapping $I_{t}:\mathcal{H}_{t}^{K}\rightarrow\mathcal{L}^{2}\left(\Omega,\mathcal{F},\mathbb{P}\right),$
defined by 
\[
I_{t}\left(f\right):=\int_{-\infty}^{t}f\left(s\right)dX_{s},
\]
is a continuous linear isometry. \end{proposition}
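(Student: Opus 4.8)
The plan is to reduce the whole statement to the Itô isometry for the driving integral $\int_{-\infty}^{t}\cdot\,dB^{\sigma}$. By Definition \ref{Defintstoch2} we have $I_{t}(f)=\int_{-\infty}^{t}\mathcal{K}_{K}(f)(t,s)dB_{s}^{\sigma}=\int_{-\infty}^{t}\mathcal{K}_{K}(f)(t,s)\sigma_{s}dB_{s}$, so $I_{t}$ factors as the linear map $f\mapsto\mathcal{K}_{K}(f)(t,\cdot)$ followed by the ordinary Itô--Wiener integral against $B$ with the predictable weight $\sigma$. Note that for fixed $t$ the integrand $\mathcal{K}_{K}(f)(t,\cdot)$ is a deterministic function, which is what makes the Itô isometry directly applicable.

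First I would record linearity. From the defining formula (\ref{eqnchap11.11}) the operator $f\mapsto\mathcal{K}_{K}(f)(t,\cdot)$ is manifestly linear in $f$, and $\mathcal{H}_{t}^{K}$ is a linear space (a fact already used in the proof of Proposition \ref{proplebstiel}); since the Itô integral is linear, $I_{t}$ is a linear map from $\mathcal{H}_{t}^{K}$ into $\mathcal{L}^{2}(\Omega,\mathcal{F},\mathbb{P})$.

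Next, for the norm identity I would apply the Itô isometry to $I_{t}(f)$. Since $\mathcal{K}_{K}(f)(t,\cdot)$ is deterministic and $\sigma$ is predictable,
\[
\mathbb{E}\left[I_{t}(f)^{2}\right]=\mathbb{E}\left[\int_{-\infty}^{t}\mathcal{K}_{K}(f)(t,s)^{2}\sigma_{s}^{2}ds\right]=\int_{-\infty}^{t}\mathcal{K}_{K}(f)(t,s)^{2}\,\mathbb{E}(\sigma_{s}^{2})\,ds.
\]
Because $\sigma$ is stationary, $\mathbb{E}(\sigma_{s}^{2})=\mathbb{E}(\sigma_{0}^{2})$ is independent of $s$, so the right-hand side equals $\mathbb{E}(\sigma_{0}^{2})\,\|\mathcal{K}_{K}(f)(t,\cdot)\|_{\mathcal{L}^{2}((-\infty,t])}^{2}=\mathbb{E}(\sigma_{0}^{2})\,\|f\|_{\mathcal{H}_{t}^{K}}^{2}$, where the last equality is just the definition of the inner product on $\mathcal{H}_{t}^{K}$. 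After normalizing so that $\mathbb{E}(\sigma_{0}^{2})=1$ (or absorbing the constant), this yields $\|I_{t}(f)\|_{\mathcal{L}^{2}(\Omega)}=\|f\|_{\mathcal{H}_{t}^{K}}$, i.e. $I_{t}$ is norm-preserving. Continuity, and in fact injectivity, then follow at once, since a norm-preserving linear map is automatically $1$-Lipschitz.

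The only point needing care — and the reason the statement is genuinely about an isometry rather than merely a bounded operator — is the random weight $\mathbb{E}(\sigma_{s}^{2})$ produced by the Itô isometry. For the identity $\|I_{t}(f)\|_{\mathcal{L}^{2}(\Omega)}=\|f\|_{\mathcal{H}_{t}^{K}}$ to hold exactly one needs $s\mapsto\mathbb{E}(\sigma_{s}^{2})$ to be constant, which is guaranteed by the stationarity of $\sigma$; absent that, one still obtains the weighted identity above, so that $I_{t}$ remains a continuous linear map and an isometry onto its range for the weighted norm $\int_{-\infty}^{t}\mathcal{K}_{K}(f)(t,s)^{2}\mathbb{E}(\sigma_{s}^{2})ds$. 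I expect this bookkeeping of the variance constant to be the only mild subtlety; everything else is a direct application of the Itô isometry.
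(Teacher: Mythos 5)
Your argument is correct and is precisely the ``straightforward'' proof the paper omits: linearity of $f\mapsto\mathcal{K}_{K}f(t,\cdot)$ composed with the It\^o isometry for $\int\cdot\,dB^{\sigma}$. Your caveat about the weight $\mathbb{E}(\sigma_{s}^{2})$ is the one genuine subtlety and you handle it correctly --- in Section 3 the process $\sigma$ is only assumed $\mathcal{L}^{2}$-bounded, not stationary, so the exact isometry with respect to the unweighted inner product $\left\langle \mathcal{K}_{K}f,\mathcal{K}_{K}g\right\rangle _{\mathcal{L}^{2}\left(\left(-\infty,t\right]\right)}$ requires the normalization (or weighted norm) you describe, and otherwise one only gets a continuous linear map that is isometric for the weighted norm $\int_{-\infty}^{t}\mathcal{K}_{K}f(t,s)^{2}\mathbb{E}(\sigma_{s}^{2})ds$.
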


\begin{remark} Observe that from \citet{BasseGravPed12}, in general,
we have that
\[
\{I_{t}\left(f\right):f\in\mathcal{H}_{t}^{K}\}\subsetneq\overline{span}\{X_{u}-X_{v}:v\leq u\leq t\},
\]
where $\overline{span}$ represents the closed linear span on $\mathcal{L}^{2}\left(\Omega,\mathcal{F},\mathbb{P}\right)$.
See also \citet{Jolis10} and \citet{PipiTaqqu00}.

\end{remark}

\subsection{The Langevin equation and OU processes driven by an $\mathcal{VMVP}$}

Let $X$ be an $\mathcal{VMVP}$ satisfying the assumptions of the
previous subsection. Recall that a process $\left(Z_{t}\right)_{t\geq0}$
solves the Langevin equation with parameter $\lambda>0$ with respect
to $X$ if and only if for $t\geq0$ almost surely 
\begin{equation}
Z_{t}=\xi+X_{t}-\lambda\int_{0}^{t}Z_{s}ds\text{,}\label{eqnchap11.12}
\end{equation}
for some random variable $\xi$ which is $\mathcal{F}_{0}$-measurable.
Thus, by looking carefully into (\ref{langevineq}) in Example \ref{madelbrovanness}
one can actually see that the process $(\int_{-\infty}^{t}e^{-\lambda(t-s)}dX_{s})_{t\geq0}$
solves the Langevin equation associated to such an $\mathcal{VMVP}$. In this case, the initial condition is 
$\xi=-\lambda\int_{-\infty}^{0}e^{\lambda u}X_{u}du$. In this part,
we generalize such a result to the context of Proposition \ref{proplebstiel}. 

For $\lambda>0$, set 
\[
f_{t,\lambda}(s):=e^{-\lambda\left(t-s\right)},\text{ \ \ \  }s\leq t\text{.}
\]

\begin{proposition} \label{Proplangevineq}Suppose that $f_{t,\lambda}$
satisfies the assumptions of Proposition \ref{proplebstiel} for every
$t\geq0$. Then the process
\begin{equation}
Z_{t}:=\int_{-\infty}^{t}e^{-\lambda(t-s)}dX_{s}\text{, \ \ }t\geq0\text{.}\label{eqnchap11.13}
\end{equation}
is the unique solution (up to a modification) of the Langevin equation
with parameter $\lambda>0$ and initial condition $\xi=-\lambda\int_{-\infty}^{0}e^{\lambda u}X_{u}du$. 

\end{proposition}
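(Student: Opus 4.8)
The plan is to reduce everything to the Lebesgue--Stieltjes representation already furnished by Proposition \ref{proplebstiel} and then verify the integral equation (\ref{eqnchap11.12}) by a pathwise integration by parts. Since $f_{t,\lambda}$ is assumed to satisfy the hypotheses of Proposition \ref{proplebstiel} for every $t\ge0$, formula (\ref{LebStieltHtH0}) applied with $f=f_{t,\lambda}$ (so that $f(t)=1$ and $df(s)=\lambda e^{-\lambda(t-s)}ds$) gives, almost surely,
\begin{equation*}
Z_t=\int_{-\infty}^{t}e^{-\lambda(t-s)}dX_s=X_t-\lambda e^{-\lambda t}G_t,\qquad G_t:=\int_{-\infty}^{t}e^{\lambda s}X_s\,ds,
\end{equation*}
which is exactly (\ref{langevineq}). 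First I would record that $G_t$ is well defined: condition (\ref{eqnchap1142}) for $f_{t,\lambda}$ together with the $\mathcal{L}^2$-boundedness of $\sigma$ yields $\mathbb{E}\int_{-\infty}^{t}e^{\lambda s}|X_s|\,ds<\infty$ by Cauchy--Schwarz, so that $s\mapsto e^{\lambda s}X_s$ is almost surely locally integrable and the improper integral at $-\infty$ converges; since $X$ admits a measurable modification, $Z$ is jointly measurable and $\int_0^t Z_s\,ds$ makes sense. I would also note that $\xi=-\lambda G_0$ is $\mathcal{F}_0$-measurable, because it involves only $(X_u)_{u\le0}$, which are measurable with respect to the increments of $B$ up to time $0$.

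For existence I would verify (\ref{eqnchap11.12}) pathwise. On the almost sure event where $G$ is finite, $G$ is absolutely continuous on compacts with $G_s'=e^{\lambda s}X_s$ for a.e. $s$, and $s\mapsto e^{-\lambda s}$ is smooth; hence $s\mapsto e^{-\lambda s}G_s$ is absolutely continuous with
\begin{equation*}
\frac{d}{ds}\bigl(e^{-\lambda s}G_s\bigr)=-\lambda e^{-\lambda s}G_s+X_s=Z_s-X_s+X_s=Z_s .
\end{equation*}
Integrating over $[0,t]$ gives $\int_0^t Z_s\,ds=e^{-\lambda t}G_t-G_0$, whence
\begin{equation*}
\xi+X_t-\lambda\int_0^t Z_s\,ds=-\lambda G_0+X_t-\lambda\bigl(e^{-\lambda t}G_t-G_0\bigr)=X_t-\lambda e^{-\lambda t}G_t=Z_t,
\end{equation*}
so $Z$ in (\ref{eqnchap11.13}) solves the Langevin equation with the stated initial condition. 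No further stochastic Fubini argument is needed here, since the stochastic interchange has already been absorbed into Proposition \ref{proplebstiel}; this step is purely deterministic.

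For uniqueness I would take two solutions $Z^{(1)},Z^{(2)}$ of (\ref{eqnchap11.12}) with the same $\xi$ and set $D_t:=Z^{(1)}_t-Z^{(2)}_t$. Subtracting the two equations cancels $\xi$ and $X_t$, leaving $D_t=-\lambda\int_0^t D_s\,ds$ for all $t\ge0$, almost surely. On this almost sure event $D$ is continuous (indeed absolutely continuous) and satisfies a homogeneous linear integral equation with $D_0=0$; by Gronwall's inequality the bound $|D_t|\le\lambda\int_0^t|D_s|\,ds$ forces $D_t\equiv0$. Hence $Z^{(1)}$ and $Z^{(2)}$ are modifications of one another, which gives uniqueness up to a modification. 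The routine parts are this integration by parts and the Gronwall argument; the only real point requiring care is the \emph{well-definedness} at the lower endpoint, i.e.\ checking that the hypotheses of Proposition \ref{proplebstiel} (in particular (\ref{eqnchap1142})) indeed guarantee both that $G_t$ converges almost surely and that the Lebesgue--Stieltjes identity (\ref{LebStieltHtH0}) applies to $f_{t,\lambda}$. This is where the main technical bookkeeping lies, but it is already supplied by the standing assumption that $f_{t,\lambda}$ satisfies the hypotheses of Proposition \ref{proplebstiel}.
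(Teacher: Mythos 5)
Your proof is correct and follows essentially the same route as the paper: both use Proposition \ref{proplebstiel} to obtain the representation $Z_t=X_t-\lambda e^{-\lambda t}\int_{-\infty}^{t}e^{\lambda s}X_s\,ds$, verify the Langevin equation by a deterministic integration by parts, and establish uniqueness from the homogeneous equation $D_t=-\lambda\int_0^t D_s\,ds$ (the paper via the ODE $D'=-\lambda D$, you via Gronwall, which is equivalent). Your direct Cauchy--Schwarz check of the integrability of $s\mapsto e^{\lambda s}X_s$ replaces the paper's appeal to Lemma \ref{lemmafubini} but serves the same purpose.
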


\begin{proof} Let us first show uniqueness. Let $Z^{1}$ and $Z^{2}$
be two solutions of the Langevin equation. Then the process
\[
H_{t}:=Z_{t}^{1}-Z_{t}^{2}=-\lambda\int_{0}^{t}H_{s}ds,\text{ \ \ }t\geq0,
\]
satisfies the ordinary differential equation
\[
\frac{d}{dt}H_{t}=-\lambda H_{t},\text{ \ \ }t\geq0\text{,}
\]
with initial condition $H_{0}=0$. This implies necessarily that $H_{t}=0$
almost surely for every $t\geq0$. The uniqueness of the solution
follows from this.

Now, if for every $t\geq0$, $f_{\lambda}\left(t,\cdot\right)$ satisfies
the assumptions of Proposition \ref{proplebstiel} we have that $Z$
is well defined and almost surely 
\begin{equation}
Z_{t}=X_{t}-\lambda e^{-\lambda t}\int_{-\infty}^{t}e^{\lambda s}X_{s}ds.\label{decomp}
\end{equation}
This, together wit Lemma \ref{lemmafubini}, show that $Z$ has a modification that is locally
Lebesgue integrable which also will be denoted by $Z$. Furthermore,
integration by parts gives that 
\[
-\lambda\int_{0}^{t}e^{-\lambda u}\int_{-\infty}^{u}e^{\lambda s}X_{s}dsdu=e^{-\lambda t}\int_{-\infty}^{t}e^{\lambda s}X_{s}ds-\int_{-\infty}^{0}e^{\lambda s}X_{s}ds-\int_{0}^{t}X_{u}du.
\]
Consequently, by (\ref{decomp}), for every $t\geq0$, almost surely
\[
-\lambda\int_{0}^{t}Z_{u}du=-\lambda[e^{-\lambda t}\int_{-\infty}^{t}e^{\lambda s}X_{s}ds-\int_{-\infty}^{0}e^{\lambda s}X_{s}ds]=Z_{t}-\xi-X_{t},
\]
as required. \end{proof}

\begin{remark} Note that the previous result is in agreement with \citet{BasseBN11},
Theorem 2.1, and Proposition 8 in \citet{BNBEnthPedVeraat14} in the
context of $\mathcal{VMVP}$. However, in general, the process $Z$
is not stationary.

\end{remark}

We conclude this section with an example that links the process appearing
in (\ref{eqnchap11.13}) and the fractional Ornstein-Uhlenbeck process.

\begin{example}[Fractional OU processes]In this example the processes
$B$ and $\sigma$ are fixed, with $\sigma$ bounded in $\mathcal{L}^{2}(\Omega,\mathcal{F},\mathbb{P})$. Let $X$ be as in Example \ref{BSSgammandVMVP}. As noticed in the
beginning of this section, any volatility modulated fBm can be written
as 
\begin{equation}
B_{t}^{H,\sigma}=X_{t}-\int_{t\land0}^{0}\left(-s\right)^{H-1/2}\sigma_{s}dB_{s},\,\,\,t\in\mathbb{R},\label{vmfbmdynamics}
\end{equation}
where $H=\alpha+1/2.$ Since the process $A_{t}=\int_{t\land0}^{0}\left(-s\right)^{H-1/2}\sigma_{s}dB_{s}$
is an increment semimartingale, the integral $\int_{-\infty}^{t}f(s)dA_{s}$
is well defined in the Itô's sense if, but in general not only if,
$\int_{-\infty}^{t}f^{2}\left(s\right)\left(-s\right)_{+}^{2H-1}ds<\infty$.
We may thus define the integral of $f$ w.r.t. to $B^{H,\sigma}$ as 
\begin{equation}
\int_{-\infty}^{t}f(s)dB_{s}^{H,\sigma}=\int_{-\infty}^{t}f(s)dX_{s}+\int_{-\infty}^{t}f(s)dA_{s}.\label{stintfbm}
\end{equation}
We have seen in Example \ref{madelbrovanness} that $f_{t,\lambda}$
satisfies the assumptions of Proposition \ref{proplebstiel}. In view
of this and the fact that $\int_{-\infty}^{t}e^{-2\lambda(t-s)}\left(-s\right)_{+}^{2H-1}ds<\infty$,
the process $Z_{t}^{H}=\int_{-\infty}^{t}e^{-\lambda(t-s)}dB_{s}^{H,\sigma}$
is well defined in the sense of (\ref{stintfbm}). When $\sigma$
is constant, $Z$ coincides with the usual fractional Ornstein-Uhlenbeck
process which was introduced in \citet{Cheridito2003}. The most remarkable
property of $Z^{H}$ is that the process
\[
U_{t}:=Z_{t}^{H}-\int_{-\infty}^{t}e^{-\lambda(t-s)}dX_{s}=e^{-\lambda t}\int_{-\infty}^{t}e^{\lambda s}\left(-s\right)_{+}^{H-1/2}\sigma_{s}dB_{s},
\]
is of unbounded variation for $t<0,$ while for $t\geq0$ it has absolutely
continuous paths almost surely.

\end{example}

\section{A pathwise decomposition of $\mathcal{BSS}$ with respect to vmfBm
and its application}

Through several examples, in the previous section we showed that there
is a strong connection between $\mathcal{BSS}$ processes and a subclass
of $\mathcal{VMVP}$. In this section, such a connection is explicitly
derived when we consider $\mathcal{BSS}$ of the form of
\begin{equation}
Y_{t}:=\int_{-\infty}^{t}\varphi_{\alpha}\left(t-s\right)\sigma_{s}dB_{s},\text{ \ \ }t\in\mathbb{R}\text{,}\label{eqnchap11.1-1}
\end{equation}
where 
\begin{equation}
\varphi_{\alpha}\left(x\right)=L\left(x\right)x^{\alpha},\text{ \ \ }x>0,\label{eqnchap11.2-1}
\end{equation}
$\alpha\in(-1/2,0)\cup(0,1/2)$ and $\sigma$ an adapted stationary
càglàd process which is bounded in $\mathcal{L}^{2}(\Omega,\mathcal{F},\mathbb{P})$.
More precisely, under certain regularities on $L$, we show that $Y$
and the $\mathcal{VMVP}$ given by
\begin{equation}
X_{t}=\int_{-\infty}^{t}[(t-s)_{+}^{\alpha}-(-s)_{+}^{\alpha}]\sigma_{s}dB_{s}\label{FractionalVMVP}
\end{equation}
 differ only by an absolutely continuous process. 

\subsection{Assumptions and statement of the result}

Let us start by introducing our working assumption which is a refinement
of that considered in \cite{BenLundPakk16A}. We recall to the reader
that the notation $f(x)=O(g(x))$ as $x\rightarrow c$ means that
$\limsup_{x\rightarrow c}\left|f(x)/g(x)\right|<\infty.$

\begin{assumption} \label{assumption1}The function $L$ in (\ref{eqnchap11.2-1})
satisfies the following:
\begin{enumerate}
\item $L$ is twice continuously differentiable on $[0,\infty)$ such that
$L(0)\neq0$ and $\int_{0}^{\infty}\left|L(s)s^{\alpha}\right|^{2}ds<\infty.$ 
\item There is $\alpha+3/2<\zeta_{0}$ such that as $s\uparrow\infty$,
$L'(s)=O(s^{-\zeta_{0}})$ and $L''(s)=O(s^{-(\zeta_{0}+1)})$.
\end{enumerate}
\end{assumption}

Under this assumption we have that:

\begin{theorem} \label{mainthm} Let Assumption \ref{assumption1}
holds and consider $X$ as in (\ref{FractionalVMVP}). Then $L(t-\cdot)$
is integrable w.r.t. $X$ for every $t\in\mathbb{R}$. Moreover, if
$Y$ belongs to the class of $\mathcal{BSS}$ appearing in (\ref{eqnchap11.1-1})
and
\[
Y_{t}^{X}:=\int_{-\infty}^{t}L\left(t-s\right)dX_{s},\text{ \ 		\ }t\in\mathbb{R},
\]
then the process
\[
V_{t}:=Y_{t}^{X}-Y_{t},\,\,\,t\geq0
\]
has absolutely continuous paths almost surely.

\end{theorem}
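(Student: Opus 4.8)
The plan is to collapse $V$ into a single Wiener integral against $dB^\sigma_s=\sigma_s\,dB_s$ and then recognise its integrand as an integral in the time variable. Writing $f_t(s):=L(t-s)$ and using $\tfrac{\partial}{\partial u}K(u,s)=\alpha(u-s)^{\alpha-1}$ for the Mandelbrot--Van Ness kernel, Definition \ref{Defintstoch2} gives $Y^X_t=\int_{-\infty}^t\mathcal K_Kf_t(t,s)\,dB^\sigma_s$ and hence $V_t=\int_{-\infty}^tG(t,s)\,dB^\sigma_s$ with $G(t,s):=\mathcal K_Kf_t(t,s)-\varphi_\alpha(t-s)$. A direct computation in which the term $L(t-s)(t-s)^\alpha$ cancels, together with the substitution $v=t-u$ showing that the integral part depends on $(t,s)$ only through $w:=t-s$, yields
\[
G(t,s)=-L(t-s)(-s)_+^\alpha+\Phi(t-s),\qquad \Phi(w):=\alpha\int_0^w[L(v)-L(w)](w-v)^{\alpha-1}\,dv .
\]
Integrating by parts (the boundary term at $v=w$ vanishes since $L\in C^1$ and $\alpha>-1$) I would replace this by the more tractable
\[
\Phi(w)=[L(0)-L(w)]w^\alpha+\int_0^wL'(v)(w-v)^\alpha\,dv .
\]

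First I would prove integrability, i.e. $f_t\in\mathcal H_t^K$. Since $\varphi_\alpha(t-\cdot)\in\mathcal L^2$ by Assumption \ref{assumption1}(1), it suffices to control $G(t,\cdot)$ on $(-\infty,t]$. Near the diagonal $G(t,s)=\Phi(t-s)=O((t-s)^{\alpha+1})$ is harmless, and the behaviour near $s=0$ costs only the factor $(-s)_+^\alpha$, which is square integrable because $\alpha>-1/2$. The delicate region is $s\to-\infty$: there Assumption \ref{assumption1}(2) forces $L(w)=O(w^{1-\zeta_0})$ and, after rewriting $\Phi(w)=w^\alpha\int_0^wL'(v)[(1-v/w)^\alpha-1]\,dv$ (the two leading $w^\alpha$ terms having cancelled), $\Phi(w)=O(w^{\alpha+1-\zeta_0}+w^{\alpha-1})$. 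Hence the terms $L(t-s)(-s)_+^\alpha$ and $\Phi(t-s)$ are square integrable at $-\infty$ exactly when their tail exponents $2(\alpha+1-\zeta_0)$ and $2(\alpha-1)$ are both below $-1$, i.e. when $\zeta_0>\alpha+3/2$ and $\alpha<1/2$; the first inequality is precisely the exponent condition demanded by Assumption \ref{assumption1}.

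Next I would establish absolute continuity by exhibiting the integrand as a time integral. Since $\Phi(0+)=0$ and $\Phi\in C^1((0,\infty))$, for almost every $s$ the map $t\mapsto\widetilde G(t,s):=G(t,s)\mathbf 1_{\{s<t\}}$ is absolutely continuous on $[0,\infty)$ with
\[
\widetilde G(t,s)-\widetilde G(0,s)=\int_0^tg(u,s)\mathbf 1_{\{s<u\}}\,du,\qquad g(u,s):=-L'(u-s)(-s)_+^\alpha+\Phi'(u-s) .
\]
The main obstacle is the asymptotics of $\Phi'$. Differentiating the rewritten $\Phi$ (using the substitution $r=w-v$ to absorb the singularity when $\alpha<0$) one obtains, after the superficially leading $w^\alpha$ contributions annihilate (which uses $\int_0^\infty L''(v)\,dv=-L'(0)$),
\[
\Phi'(w)=\alpha[L(0)-L(w)]w^{\alpha-1}+w^\alpha\int_0^wL''(v)[(1-v/w)^\alpha-1]\,dv .
\]
Assumption \ref{assumption1}(2) then gives $\Phi'(w)=O(w^{\alpha-1})$, so $g(u,s)=O(|s|^{\alpha-1})$ as $s\to-\infty$, which is square integrable precisely because $\alpha<1/2$.

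Finally I would verify the hypothesis of the stochastic Fubini theorem (Theorem \ref{StFubiniThm}, cf. Lemma \ref{lemmafubini}). Splitting the $s$-integral at $0$ and using $\int_0^t\Phi'(w)^2\,dw<\infty$ on $[0,u)$ and the decay $g(u,s)=O(|s|^{\alpha-1})$ on $(-\infty,0]$, the estimates above yield $\int_0^t\int_{-\infty}^ug(u,s)^2\,ds\,du<\infty$; since $\sigma$ is bounded in $\mathcal L^2(\Omega,\mathcal F,\mathbb P)$, this together with the Cauchy--Schwarz inequality controls $\int_0^t(\mathbb E\int_{-\infty}^ug(u,s)^2\sigma_s^2\,ds)^{1/2}\,du$. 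Exchanging the order of integration then gives, for $t\ge0$,
\[
V_t-V_0=\int_0^t\Big(\int_{-\infty}^ug(u,s)\,dB^\sigma_s\Big)\,du ,
\]
so that $V$ has a modification with absolutely continuous paths, as claimed. The two cancellation phenomena in $\Phi$ and $\Phi'$, together with the matching of $\zeta_0$ against $\alpha+3/2$ and the constraint $\alpha<1/2$, are where Assumption \ref{assumption1} is consumed and constitute the heart of the argument.
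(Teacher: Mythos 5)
Your argument is correct, and it rests on the same pillars as the paper's proof: the cancellation of the singular term $L(t-s)(t-s)^{\alpha}$ inside $\mathcal{K}_{K}L(t-\cdot)$, tail estimates that consume precisely $\zeta_{0}>\alpha+3/2$ and $\alpha<1/2$, and a stochastic Fubini interchange to realise the $t$-derivative of the kernel as a density for $V$. The packaging, however, is genuinely different. The paper splits $V=U^{1}+U^{2}+U^{3}$ with kernels $[L(0)-L(t-s)]K(t,s)$, the Lebesgue--Stieltjes term $\int_{-\infty}^{t}L'(t-s)X_{s}\,ds$ coming from Proposition \ref{proplebstiel}, and $-L(t-s)(-s)_{+}^{\alpha}$, and differentiates each piece separately, delegating the square integrability at $-\infty$ to Lemma \ref{lemmaHtk}. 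You collapse everything into the single kernel $G(t,s)=-L(t-s)(-s)_{+}^{\alpha}+\Phi(t-s)$; your $\Phi$ is exactly the sum of the paper's $U^{1}$- and $U^{2}$-kernels (their $(-s)_{+}^{\alpha}$ contributions cancel), and your asymptotics for $w^{\alpha}\int_{0}^{w}L'(v)[(1-v/w)^{\alpha}-1]\,dv$, and for the analogous expression with $L''$, reproduce the content of Lemma \ref{lemmaHtk} applied to $L'$ and $L''$. What your route buys is transparency: the $t$-stationarity of $\Phi$ makes both cancellations and the exact role of each exponent in Assumption \ref{assumption1} visible, and only one Fubini interchange is needed instead of three. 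What it costs is the explicit computation and differentiation of $\Phi$, which the paper sidesteps via the Lebesgue--Stieltjes representation. Two cosmetic points: the bound $L(w)=O(w^{1-\zeta_{0}})$ uses part (1) of Assumption \ref{assumption1} as well (to force $L(\infty)=0$), not only part (2); and when $\zeta_{0}=2$ your $O(w^{\alpha-1})$ estimate acquires a logarithmic factor, which is harmless since $2(\alpha-1)<-1$ strictly.
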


\begin{proof} Along the proof the constant $c_\alpha>0$ is such that $\int_{-\infty}^{t}K(t,s)^2ds=c_\alpha t^{2\alpha+1}$. Firstly we note that under Assumption \ref{assumption1}, $L,L'$ and $L''$ are totally bounded and satisfy (\ref{eqnchap1143}). Thus, thanks to Proposition \ref{proplebstiel} and Lemma \ref{lemmaHtk} below, $L\left(t-\cdot\right)\in\mathcal{H}_{t}^{K}$
for any $t\in\mathbb{R}$. Furthermore, Assumption \ref{assumption1} also 
guarantees the existence of $\alpha+3/2<\zeta$
such that for any $x\geq0$ 
\begin{align}
\left|L(x)\right| & \leq M_{0}\left|x\right|^{-(\zeta-1)};\nonumber\\
\left|L'(x)\right| & \leq M_{1}\left|x\right|^{-\zeta};\label{estimatesmainthm}\\
\left|L''(x)\right| & \leq M_{2}\left|x\right|^{-(\zeta+1)}\nonumber.
\end{align}
for some constants $M_{0},M_{1},M_{2}>0$. In view of this and since $L$ is twice continuously
differentiable on $[0,\infty)$ we have that for any $-\infty<s_{0}<t\wedge0$
\begin{equation}
\int_{-\infty}^{t}s^{\alpha+1/2}\left|L'(t-s)\right|ds \leq\int_{s_{0}}^{t}s^{\alpha+1/2}\left|L'(t-s)\right|ds+M_{1}\int_{-\infty}^{s_{0}}s^{\alpha+1/2}\left|t-s\right|^{-\zeta}ds<\infty\label{eq1proofthm}.
\end{equation}
Therefore, from Example \ref{madelbrovanness} and Proposition \ref{proplebstiel}, almost surely
\[
L(0)X_{t}=Y_{t}^{X}-\int_{-\infty}^{t}L'(t-s)X_{s}ds.
\]
On the other hand, reasoning as in (\ref{eq1proofthm}) we obtain that $\int_{-\infty}^t\arrowvert L(t-s)s^\alpha\lvert^2ds<\infty$. Moreover, since $L$ is totally bounded we deduce that the mapping $s\mapsto L(t-s)K(t,s)$ is square integrable.
Hence, we get that almost surely
\[
L(0)X_{t}=Y_{t}+\int_{-\infty}^{t}\left[L\left(0\right)-L\left(t-s\right)\right]K\left(t,s\right)\sigma_{s}dB_{s}-\int_{-\infty}^{t}L(t-s)(-s)_{+}^{\alpha}\sigma_{s}dB_{s}.
\]
All in all imply that for $t\geq0$ almost surely
\[
V_{t}=U_{t}^{1}+U_{t}^{2}+U_{t}^{3},
\]
where
\begin{align*}
U_{t}^{1}:= & \int_{-\infty}^{t}K_{L}\left(t,s\right)\sigma_{s}dB_{s},\\
U_{t}^{2}:= & \int_{-\infty}^{t}[L(t-s)-L(0)]dX_s,\\
U_{t}^{3}:= & -\int_{-\infty}^{0}L(t-s)(-s)^{\alpha}\sigma_{s}dB_{s},
\end{align*}
with 
\[
K_{L}\left(t,s\right):=\left[L\left(0\right)-L\left(t-s\right)\right]K\left(t,s\right)\text{, \ \ }t>s.
\]
Hence, in order to finish the proof, it is enough to show that for $i=1,2,3$, $U^{i}$
has almost surely absolutely continuous paths. Suppose for a
moment that the following processes are well defined 
\begin{align*}
u_{t}^{1}:= & \int_{-\infty}^{t}\frac{\partial}{\partial t}K_{L}\left(t,s\right)\sigma_{s}dB_{s},\\
u_{t}^{2}:= &L'(0)X_t+ \int_{-\infty}^{t}L''(t-s)X_{s}ds,\\
u_{t}^{3}:= & -\int_{-\infty}^{0}L'(t-s)(-s)^{\alpha}\sigma_{s}dB_{s},
\end{align*}
and that the Fubini Theorem (stochastic and non-stochastic) can be
applied. Then, we necessarily have that 
\[
\int_{0}^{t}u_{s}^{i}ds=U_{t}^{i}-U_{0}^{i},\,\,\,t\geq0,
\]
which would conclude the proof. Therefore, in what follows, we check
that for $i=1,2,3$, $u^{i}$ is well defined and that the Fubini
theorem can be applied. 

Fix $t\geq0>s_0>-\infty$.
The estimates in (\ref{estimatesmainthm}) and the fact
that $L'$ and $L''$ are totally bounded by, let's say $M',M''>0$, respectively, give us that
\begin{align}
\int_{-\infty}^{t}\left|L'(t-s)K\left(t,s\right)\right|^{2}ds & \leq M'c_\alpha t^{2\alpha+1}<\infty;\nonumber \\
\int_{-\infty}^{t}\left|\left[L\left(0\right)-L\left(t-s\right)\right]\frac{\partial}{\partial t}K\left(t,s\right)\right|^{2}ds & =\int_{0}^{\infty}\left|\left[L\left(0\right)-L\left(r\right)\right]r^{\alpha-1}\right|^{2}ds<\infty;\label{estimates}\\
\int_{-\infty}^{0}\left|L'(t-s)(-s)^{\alpha}\right|^{2}ds & \leq M'\int_{s_0}^{0}s^{2\alpha}ds+M_1\int_{-\infty}^{s_0}s^{2(\alpha-\zeta)}ds<\infty,\nonumber 
\end{align}
which shows the well definiteness of $u^{1}$ and $u^{3}$, while
\begin{equation}
\int_{-\infty}^{t}\arrowvert L''(t-s)s^{\alpha+1/2}\lvert ds\leq M''\int_{s_0}^{t}s^{\alpha+1/2}ds+M_1\int_{-\infty}^{s_0}s^{(\alpha-\zeta)-1/2}ds<\infty,\label{secondderestimates}
\end{equation}
together with Proposition \ref{proplebstiel} and Lemma \ref{lemmaHtk} below, show that $u^{2}$ is well defined and admits the representation
\begin{equation}
u^2_t=\int_{-\infty}^tL'(t-s)dX_s. \label{lbiprimrepre}
\end{equation}
Therefore, it only rests to check that we can apply the stochastic
Fubini Theorem. By using (\ref{estimates}) and applying the triangle inequality
one obtain that
\begin{equation*}
\int_{0}^{t}(\int_{-\infty}^{u}\frac{\partial}{\partial u}K_{L}\left(u,s\right)^{2}ds)^{1/2}du \leq(M'c_\alpha)^{1/2}\int_{0}^{t}s^{\alpha+1/2}du+(\int_{0}^{\infty}\left|\left[L\left(0\right)-L\left(r\right)\right]r^{\alpha-1}\right|^{2}ds)^{1/2}t<\infty.
\end{equation*}
which according to Theorem \ref{StFubiniThm} justifies the interchange
of the Itô's integral with the Lebesgue integral for $u^{1}$. Finally, thanks to (\ref{estimates}), (\ref{secondderestimates}) and the Dominated Convergence Theorem, we deduce that the mappings $u\mapsto\int_{-\infty}^{0}\left|L'(u-s)(-s)^{\alpha}\right|^{2}ds$ and $u\mapsto\int_{-\infty}^{u}\arrowvert L''(u-s)s^{\alpha+1/2}\lvert ds$ are continuous on $[0,\infty)$. This in particular implies that 
\[ \int_{0}^{t}\int_{-\infty}^{0}\left|L'(u-s)(-s)^{\alpha}\right|^{2}dsdu<\infty,\text{ and }\int_{0}^{t}\int_{-\infty}^{u}\arrowvert L''(u-s)s^{\alpha+1/2}\lvert dsdu<\infty, \]
which, according to Remark \ref{remarkfubini}, justifies the application of the stochastic Fubini theorem for $u^{2}$ and $u^{3}.$\end{proof}

\begin{lemma} \label{lemmaHtk}Fix $t\in\mathbb{R}$. For every continuous
function $f$ on $(-\infty,t]$, let
\[
\ell_{t}(s):=\int_{s}^{t}\{\left(r-s\right)_{+}^{\alpha}-\left(-s\right)_{+}^{\alpha}\}f(r)dr,\,\,\,s<t.
\]
 If $f(u)=O(\left|u\right|^{-\zeta})$ as $u\downarrow-\infty$, for
some $\alpha+3/2<\zeta$, then
\[
\int_{-\infty}^{t}\ell_{t}(s)^{2}ds<\infty.
\]
\end{lemma}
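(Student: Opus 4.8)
The plan is to prove $\ell_t\in\mathcal{L}^2((-\infty,t])$ by fixing some $s_0<\min\{0,t\}$ and splitting $\int_{-\infty}^t\ell_t(s)^2\,ds=\int_{s_0}^t+\int_{-\infty}^{s_0}$. The bounded piece is routine, and all of the content sits in the tail $s\to-\infty$, where the hypothesis $\zeta>\alpha+3/2$ turns out to be exactly what is needed.

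For the bounded piece, put $M:=\sup_{[s_0,t]}|f|<\infty$ by continuity of $f$. Using $\int_s^t(r-s)^\alpha\,dr=(t-s)^{\alpha+1}/(\alpha+1)$ and $\int_s^t(-s)_+^\alpha\,dr=(-s)_+^\alpha(t-s)$, one gets $|\ell_t(s)|\le C(1+(-s)_+^\alpha)$ on $[s_0,t]$. Since $\alpha>-1/2$, the bound $1+(-s)_+^{2\alpha}$ is integrable on $[s_0,t]$ (the only possible singularity, at $s=0$ when $0\in(s_0,t)$, is mild), so $\int_{s_0}^t\ell_t(s)^2\,ds<\infty$.

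For the tail I would set $a:=-s\to+\infty$, so that $K(r,s)=(r+a)^\alpha-a^\alpha$, and split $\int_s^t=\int_{-a}^{-a/2}+\int_{-a/2}^t=:\ell^{(\mathrm i)}+\ell^{(\mathrm{ii})}$. The crucial point, and the main obstacle, is that one must \emph{not} estimate $(r-s)^\alpha$ and $(-s)^\alpha$ separately: the bare term $(-s)^{2\alpha}=a^{2\alpha}$ is not integrable at $-\infty$ because $\alpha>-1/2$, so the cancellation inside $(r+a)^\alpha-a^\alpha$ is essential. On region (ii), where $|r|\le a/2$, the mean value theorem gives $|(r+a)^\alpha-a^\alpha|\le|\alpha|(a/2)^{\alpha-1}|r|\le Ca^{\alpha-1}|r|$; combining this with $|f(r)|\le C|r|^{-\zeta}$ for $r$ below a fixed level (and continuity above it) yields $|\ell^{(\mathrm{ii})}|\le Ca^{\alpha-1}\int_{-a/2}^t|r|\,|f(r)|\,dr\le C(a^{\alpha-1}+a^{\alpha+1-\zeta})$, the two exponents arising according as $\zeta>2$ or $\zeta<2$ (with a harmless logarithm at $\zeta=2$). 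On region (i), where $|r|\ge a/2$ forces $|f(r)|\le Ca^{-\zeta}$, I would bound crudely $\int_{-a}^{-a/2}|(r+a)^\alpha-a^\alpha|\,dr\le\int_{-a}^{-a/2}[(r+a)^\alpha+a^\alpha]\,dr\le Ca^{\alpha+1}$ (finite since $\alpha+1>0$), giving $|\ell^{(\mathrm i)}|\le Ca^{\alpha+1-\zeta}$.

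Combining the two regions, $|\ell_t(-a)|\le C(a^{\alpha-1}+a^{\alpha+1-\zeta})$ for large $a$, hence $\ell_t(-a)^2\le C(a^{2\alpha-2}+a^{2\alpha+2-2\zeta})$. The first term is integrable in $a$ at $+\infty$ since $2\alpha-2<-1\iff\alpha<1/2$, and the second since $2\alpha+2-2\zeta<-1\iff\zeta>\alpha+3/2$, which is precisely the assumption. Thus $\int_{-\infty}^{s_0}\ell_t(s)^2\,ds<\infty$, completing the argument. I expect the hard part to be exactly this exponent bookkeeping on region (ii): one must extract the first-order cancellation in $(r+a)^\alpha-a^\alpha$ and track how $\int|r|^{1-\zeta}$ produces the critical power, which is where the threshold $\zeta>\alpha+3/2$ is born.
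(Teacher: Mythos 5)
Your proof is correct: the splitting into a compact piece and a tail, the use of the mean value theorem to extract the cancellation in $(r-s)^{\alpha}-(-s)^{\alpha}$, and the final exponent count ($2\alpha-2<-1$ from $\alpha<1/2$, and $2\alpha+2-2\zeta<-1$ from $\zeta>\alpha+3/2$) all check out, including the harmless logarithm at $\zeta=2$. Your route through the tail uses a different decomposition from the paper's. The paper splits the $r$-integral at $r=0$ and rescales the piece over $[s,0]$ via $r\mapsto sr$, writing the dominant contribution as $|s|^{\alpha+1}\int_{0}^{1}[(1-r)^{\alpha}-1]f(sr)\,dr$; it then avoids your casework on $\zeta$ versus $2$ by replacing $\zeta$ with a single exponent $\tilde{\zeta}$ satisfying $\alpha+3/2<\tilde{\zeta}<\min\{2,\zeta\}$ (legitimate since $f$ is continuous, so $|f(x)|\leq M|x|^{-\tilde{\zeta}}$ for all $x\leq0$), which makes the one fixed integral $\int_{0}^{1}|(1-r)^{\alpha}-1|\,r^{-\tilde{\zeta}}\,dr$ converge --- the factor $|(1-r)^{\alpha}-1|\sim|\alpha|r$ near $r=0$ absorbing the singularity, i.e.\ the same first-order cancellation you exploit on your region (ii). The leftover integral over $[t\wedge0,t\vee0]$ is then shown to be $O(|s|^{\alpha-1})$ by the mean value theorem, exactly your mechanism. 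Your split at $r=s/2$, with a crude triangle-inequality bound on $[s,s/2]$ and the explicit case distinction $\zeta\gtrless2$, is more elementary and equally valid; the paper's substitution buys a single uniform estimate at the price of the slightly fussy reduction to $\tilde{\zeta}<2$. Your pointwise bound $|\ell_{t}(s)|\leq C(1+(-s)_{+}^{\alpha})$ for the compact piece is also simpler than the paper's Jensen--Tonelli argument and works for the same reason, namely $2\alpha>-1$.
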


\begin{proof}

Let $c_\alpha>0$ be as in the proof of Theorem  \label{mainthm}. Firstly, we claim that $\ell_{t}$ is locally square integrable on
$(-\infty,t]$ whenever $f$ is . Indeed, let $-\infty<s_{0}<t$.
Then by the Jensen's inequality and Tonelli's Theorem, it holds that
\begin{align*}
(\int_{s_{0}}^{t}\left|f(u)\right|du)^{-1}\int_{s_{0}}^{t}\left|\ell_{t}(s)\right|^{2}ds & \leq\int_{s_{0}}^{t}\int_{s_{0}}^{r}\{\left(r-s\right)_{+}^{\alpha}-\left(-s\right)_{+}^{\alpha}\}^{2}ds\left|f(r)\right|dr\leq c_{\alpha}\int_{s_{0}}^{t}r^{2\alpha+1}\left|f(r)\right|dr<\infty,
\end{align*}
which proves our claim. Consequently, we only need to show that under our hypothesis on $f$, $\ell_{t}$ is square integrable at infinity.
Let $s<(t\wedge0)$, then 
\begin{equation}
\ell_{t}(s)=\begin{cases}
\left|s\right|^{\alpha+1}\int_{0}^{1}[(1-r)^{\alpha}-1]f(sr)dr-\int_{t}^{0}\{\left(r-s\right)^{\alpha}-\left(-s\right)^{\alpha}\}f(r)dr & \text{if }t\leq0;\\
\left|s\right|^{\alpha+1}\int_{0}^{1}[(1-r)^{\alpha}-1]f(sr)dr+\int_{0}^{t}\{\left(r-s\right)^{\alpha}-\left(-s\right)^{\alpha}\}f(r)dr & \text{if }t>0.
\end{cases}\label{ldecomposition}
\end{equation}
We show first that in general for any continuous $f$, it holds that
\begin{equation}
\ell_{t}(s)-\left|s\right|^{\alpha+1}\int_{0}^{1}[(1-r)^{\alpha}-1]f(sr)dr=O(\left|s\right|^{\alpha-1}),\,\,\,s\downarrow-\infty.\label{negliblelt}
\end{equation}
By the mean value theorem, we have that for $t>0$, 
\[
\int_{0}^{t}\left|\{\left(r-s\right)^{\alpha}-\left(-s\right)^{\alpha}\}f(r)\right|dr\leq\left|\alpha\right|\left|s\right|^{\alpha-1}\int_{0}^{t}\left|rf(r)\right|dr,
\]
which implies trivially (\ref{negliblelt}). In an analogous way, we get that for $s<t\leq0$
\[
\int_{t}^{0}\left|\{\left(r-s\right)^{\alpha}-\left(-s\right)^{\alpha}\}f(r)\right|dr\leq\left|\alpha\right|\left|s\right|^{\alpha-1}\int_{t}^{0}\left(1-\frac{r}{s}\right)^{\alpha-1}\left|rf(r)\right|dr\leq\left|\alpha\right|\left(1-\frac{t}{s}\right)^{\alpha-1}\left|s\right|^{\alpha-1}\int_{t}^{0}\left|rf(r)\right|dr,
\]
from which (\ref{negliblelt}) can be deduced. All above implies
that if $f$ is continuous, then $\ell_{t}$ is square integrable if
and only if the mapping $s\mapsto\left|s\right|^{\alpha+1}\int_{0}^{1}[(1-r)^{\alpha}-1]f(sr)dr$
is square integrable at infinity. Let us show that under our hypothesis
this holds. Since $f$ is continuous on $(-\infty,t]$ and $f(u)=O(u^{-\zeta})$
for some $\alpha+3/2<\zeta$ at $-\infty$, then for any $\alpha+3/2<\tilde{\zeta}\leq\zeta$,
we can choose a constant $M>0$ such that
for any $x\leq0$ 
\[
\left|f(x)\right|\leq M\left|x\right|^{-\tilde{\zeta}},
\]
where we use the convention that $0^{-1}=+\infty.$ In particular,
if we choose $\tilde{\zeta}$ such that $\alpha+3/2<\tilde{\zeta}<\min\{2,\zeta\}$
then for any $s<0$ 
\begin{align*}
\left|s\right|^{\alpha+1}\left|\int_{0}^{1}[(1-r)^{\alpha}-1]f(sr)dr\right| & \leq M\left|s\right|^{\alpha+1-\tilde{\zeta}}\int_{0}^{1}\left|(1-r)^{\alpha}-1\right|r^{-\tilde{\zeta}}dr.
\end{align*}
Since $\tilde{\zeta}<2$, we have that $\int_{0}^{1}\left|(1-r)^{\alpha}-1\right|r^{-\tilde{\zeta}}dr<\infty$.
The desired square integrability follows from the previous estimates
and the fact that $\alpha+1-\tilde{\zeta}<-1/2.$\end{proof}

\begin{corollary} \label{fBmdecompcoro}Under the assumptions of
Theorem \ref{mainthm}, we have that for every $t\in\mathbb{R}$,
almost surely 
\[
Y_{t}=L(0)B_{t}^{H,\sigma}+U_{t}+A_{t},
\]
where $Y$ is a $\mathcal{BSS}$ process of the form of (\ref{eqnchap11.1-1}),
$B^{H,\sigma}$ is a volatility modulated fractional Brownian motion
with $H=\alpha+1/2$, $U$ has absolutely continuous paths, and $A$
is an increment semimartingale for $t<0$ and it vanishes for $t\geq0$.
Moreover, for $t\geq0$, the following hold
\begin{enumerate}
\item $(Y_{t})_{t\geq0}$ is a semimartingale if and only if $\alpha=0$
and has a version which is Hölder continuous with index $\rho<\alpha+1/2;$
\item The $p$-variation of $Y$ over $\left[0,T\right]$
\[
V_{p}^{Y}\left(\left[0,T\right]\right)^{p}:=\sup_{\pi:\pi\text{ is a partition of }\left[0,T\right]}\left\{ \sum_{t_{k}\in\pi}\left\vert Y_{t_{k}}-Y_{t_{k-1}}\right\vert ^{p}\right\} ,
\]
is finite almost surely for every $p>\frac{1}{\alpha+1/2}$. In particular,
the quadratic variation of $Y$ vanishes for $\alpha>0$ and is not
finite for $\alpha<0$. 
\end{enumerate}
\end{corollary}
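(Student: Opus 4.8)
The plan is to deduce the whole statement from the decomposition $Y_t=L(0)B_t^{H,\sigma}+U_t$ (for $t\geq 0$) produced by Theorem \ref{mainthm}, transferring to $Y$ the path regularity, the $p$--variation and the semimartingale behaviour that are already known for the volatility modulated fractional Brownian motion.

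First I would isolate the decomposition. For $t\geq 0$ the Mandelbrot--Van Ness kernel restricted to the half--line coincides with the full one, so that by (\ref{vmfbmdynamics}) one has $X_t=B_t^{H,\sigma}+A_t$ with $H=\alpha+1/2$ and $A_t:=\int_{t\wedge 0}^0(-s)^\alpha\sigma_s dB_s$, an increment semimartingale vanishing on $[0,\infty)$. Writing $L(t-s)=L(0)+[L(t-s)-L(0)]$ and using that the integral of a constant against $X$ returns $L(0)X_t$, one gets $Y_t^X=L(0)X_t+U_t^2$, where $U^2_t=\int_{-\infty}^t[L(t-s)-L(0)]dX_s$ was shown to be absolutely continuous for $t\geq 0$ in the proof of Theorem \ref{mainthm}. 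Since that theorem also gives that $V_t=Y_t^X-Y_t$ is absolutely continuous for $t\geq 0$, I obtain $Y_t=L(0)B_t^{H,\sigma}+U_t$ on $[0,\infty)$ with $U:=U^2-V$ absolutely continuous; combined with $X=B^{H,\sigma}+A$ this is the claimed global decomposition, the increment--semimartingale term being active only for $t<0$. Moreover $U=-(U^1+U^3)$ has density $-(u^1+u^3)$, and since $u^1,u^3$ are $\mathcal{L}^2$--continuous Itô integrals they admit continuous modifications, so $U$ is in fact $C^1$ and hence locally Lipschitz.

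Granting this, item 1 and item 2 follow from two elementary facts about path functionals. By the localisation argument of Section 2.3, $B^{H,\sigma}$ has a modification that is $\rho$--Hölder continuous for every $\rho<H=\alpha+1/2$; as $U$ is locally Lipschitz and $L(0)\neq 0$ by Assumption \ref{assumption1}, $Y$ inherits a $\rho$--Hölder modification for each $\rho<\alpha+1/2$, giving the regularity part of item 1. For item 2 I would use that a $\rho$--Hölder path has finite $p$--variation whenever $\rho p>1$ while a locally Lipschitz (hence finite--variation) path has finite $p$--variation for every $p\geq 1$; choosing $\rho\in(1/p,H)$ shows $V_p^{B^{H,\sigma}}([0,T])<\infty$ for all $p>1/(\alpha+1/2)$, and the Minkowski--type inequality $V_p(f+g)^{1/p}\leq V_p(f)^{1/p}+V_p(g)^{1/p}$ yields $V_p^{Y}([0,T])<\infty$ for the same range of $p$. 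The quadratic--variation statement is the case $p=2$: when $\alpha>0$ one has $2>1/(\alpha+1/2)$ and the fractional Brownian motion with $H>1/2$ has vanishing quadratic variation, while the remaining sums $\sum(\Delta U)^2\leq\max_k|\Delta U|\,V_1(U)$ and $\sum\Delta B^{H,\sigma}\Delta U$ (bounded by $\max_k|\Delta B^{H,\sigma}|\,V_1(U)$) tend to $0$ by continuity, so $[Y]=0$; when $\alpha<0$ the same two estimates make the cross and $U$ contributions vanish and $\sum(\Delta Y)^2=L(0)^2\sum(\Delta B^{H,\sigma})^2+o(1)$ diverges because $B^{H,\sigma}$ has infinite quadratic variation for $H<1/2$.

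Finally, the semimartingale dichotomy in item 1 follows from the decomposition together with these variation estimates: $U$ is of finite variation, hence a semimartingale, and $L(0)\neq 0$, so $Y$ is a semimartingale iff $B^{H,\sigma}$ is one. For $\alpha<0$ the infinite quadratic variation rules this out; for $\alpha>0$ a continuous semimartingale with zero quadratic variation must be of finite variation, yet $B^{H,\sigma}$ (and hence $Y$, since $V_1(U)<\infty$) has infinite $1$--variation because $1<1/H$ for $H<1$. Thus $Y$ is never a semimartingale on $(-1/2,0)\cup(0,1/2)$, which is the content of the ``only if $\alpha=0$'' direction. The step I expect to be the main obstacle is transferring the fractional Brownian variation results --- finite $p$--variation for $p>1/H$, the value of the quadratic variation, and the infinite lower $p$--variation --- from the classical $B^H$ to the volatility modulated $B^{H,\sigma}$, and controlling the cross term $\sum\Delta B^{H,\sigma}\Delta U$ carefully enough for the $\alpha<0$ lower bound; by contrast, the regularity of $U$ and the algebra of the decomposition are routine once Theorem \ref{mainthm} is in hand.
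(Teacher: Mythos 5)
Your construction of the decomposition is exactly the paper's: the published proof simply combines the identity $L(0)X_{t}=Y_{t}-U_{t}$, extracted from the proof of Theorem \ref{mainthm}, with (\ref{vmfbmdynamics}) and sets $A_{t}:=L(0)\int_{t\wedge0}^{0}(-s)^{H-1/2}\sigma_{s}dB_{s}$; your bookkeeping $U=U^{2}-V=-(U^{1}+U^{3})$ is a more explicit rendering of the same step, and the paper then declares that items 1 and 2 ``follow''. Since you attempt to actually justify them, two gaps surface. First, to transfer the H\"older index $\rho<\alpha+1/2$ from $B^{H,\sigma}$ to $Y$ you need $U$ to be locally H\"older of order at least $\alpha+1/2$; absolute continuity alone does not give this, and your upgrade to ``$U$ is $C^{1}$, hence locally Lipschitz'' rests on the claim that $u^{1},u^{3}$ admit continuous modifications because they are $\mathcal{L}^{2}$-continuous. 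That inference is invalid: $\mathcal{L}^{2}$-continuity of a process does not yield a continuous modification; you would need a quantitative Kolmogorov-type bound $\mathbb{E}\left|u^{i}_{t}-u^{i}_{s}\right|^{2}\leq C\left|t-s\right|^{\gamma}$, which is plausible from the estimates in the proof of Theorem \ref{mainthm} but must be verified. Second, the two lower bounds you invoke --- infinite quadratic variation of $B^{H,\sigma}$ for $H<1/2$, and infinite $1$-variation for the ``only if'' half of the semimartingale dichotomy --- do not follow from the upper $p$-variation estimates you prove; they are separate facts, classical for $B^{H}$, which for the volatility modulated process additionally require non-degeneracy of $\sigma$ (if $\sigma\equiv0$ both quantities vanish) and a genuine lower-bound argument. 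You correctly flag this as the main obstacle but leave it unresolved. The paper is silent on both points, so your proposal follows its route faithfully while exposing exactly where the assertion that the path properties ``follow'' conceals the remaining work.
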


\begin{proof} From the proof of Theorem \ref{mainthm}, we have that
almost surely 
\[
L(0)X_{t}=Y_{t}-U_{t}.
\]
where $U$ is a process with absolutely continuous paths. Moreover,
from (\ref{vmfbmdynamics}), we have that 
\[
L(0)X_{t}=L(0)[B_{t}^{H,\sigma}+\int_{t\land0}^{0}\left(-s\right)^{H-1/2}\sigma_{s}dB_{s}],\,\,\,t\in\mathbb{R},
\]
The conclusion of this corollary follows by letting
\begin{align*}
A_{t}:=L(0) & \int_{t\land0}^{0}\left(-s\right)^{H-1/2}\sigma_{s}dB_{s},\,\,\,t\in\mathbb{R}.
\end{align*}
\end{proof}

\begin{example}Let
\begin{align*}
L_{\lambda}(x)= & e^{-\lambda x}\\
L_{\beta,\alpha}= & (1+x)^{-(\alpha+\beta)},
\end{align*}
for $\lambda>0$, $\alpha\in(-1/2,0)\cup(0,1/2)$ and $\beta>1/2$.
Since
\begin{align*}
\lim_{x\downarrow\infty}L_{\lambda}'(x)x^{p} & =\lim_{x\downarrow\infty}L_{\lambda}''(x)x^{p+1}=0;\\
\lim_{x\downarrow\infty}L'_{\beta,\alpha}(x)x^{\zeta_{0}}= & \lim_{x\downarrow\infty}L''_{\beta,\alpha}(x)x^{(\zeta_{0}+1)}=0,
\end{align*}
for any $p\geq0$ and $\alpha+3/2<\zeta_{0}<\alpha+\beta+1$ we have
that $L_{\lambda}\text{ and }L_{\beta,\alpha}$ satisfy Assumption
\ref{assumption1}. This means that for $t\geq0$, $\mathcal{BSS}$
processes with a gamma kernel and power-$\mathcal{BSS}$ differ from
a vmfBm only by an absolutely continuous process. It is interesting
to note that the former type of $\mathcal{BSS}$ has short memory,
meaning this that $L_{\lambda}$ kills all the persistence coming
from the vmfBm. An analogous reasoning can be done to the power-$\mathcal{BSS}$
when $\beta>1$, while for $1/2<\beta<1$ the memory introduced by
the vmfBm is not negligible anymore.

\end{example}

\subsection{Itô's formulae for $\mathcal{BSS}$ processes}

In the previous subsection we have seen that any $\mathcal{BSS}$
of the form of (\ref{eqnchap11.1}) are not in general semimartingales
and they are in fact closely related to fractional Brownian motions.
In this subsection, by using some stochastic integrals (with random
integrands) for the fractional Brownian motion, we derive some Itô's
formulae. Observe that, as in the case of the fBm, stochastic integrals
(with random integrands) can be defined in several ways. Let us note
that the goal of this subsection is only to show an application of
Theorem \ref{mainthm}. We do not pretend to develop stochastic integrals
with respect to $Y$ for random integrands, but rather use the well-known
results concerning to the fBm. For a survey in stochastic calculus
for fBm we refer to \citep{Coutin07}.

\paragraph{Itô's formula based on Young integrals}

Let $T>0.$ Consider $Y$ as in (\ref{eqnchap11.1}). Within the framework
of Corollary \ref{fBmdecompcoro}, the $p$-variation of $Y$ is finite
almost surely for every $p>\frac{1}{\alpha+1/2}$. Therefore, if $\left(Z_{t}\right)_{t\geq0}$
is a continuous process with finite $q$-variation, with $q<\frac{1}{\alpha-1/2}$
and $\alpha>0$, we have that the following Riemann sums
\[
\sum\limits _{i=0}^{n-1}Z_{t_{i}^{n}}\left(Y_{t_{i+1}^{n}}-Y_{t_{i}^{n}}\right),
\]
converges almost surely. Here, $0=t_{0}^{n}<t_{1}^{n}<\cdots<t_{n}^{n}=T$
and are such that $\sup\left\vert t_{i+1}^{n}-t_{i}^{n}\right\vert \rightarrow0$
as $n\rightarrow\infty$. The limit does not depend on the partition
$\left(t_{i}^{n}\right).$ See \citet{Young36} for more details.
Hence, we may define $\int_{0}^{T}Z_{s}dY_{s}$, the stochastic integral
of $Z$ with respect to $Y,$ as such limit. Using this result we
obtain the following Itô's formula based on \citet{Young36}.

\begin{theorem} Let $Y$ be as in (\ref{eqnchap11.1-1}) with $0<\alpha<1/2.$
For every $f:\mathbb{R\rightarrow}\mathbb{R}$ continuously differentiable
function with Hölder continuous derivative of order $\beta>\frac{1}{\alpha-1/2}-1$
the following Itô's formula holds
\[
f\left(Y_{T}\right)=f\left(Y_{0}\right)+\int_{0}^{T}f^{\prime}\left(Y_{s}\right)dY_{s}\text{,}
\]
where $\int_{0}^{T}f^{\prime}\left(Y_{s}\right)dY_{s}$ is understood
pathwise. \end{theorem}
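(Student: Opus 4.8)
The plan is to recognize the claimed identity as the fundamental theorem of calculus for Young integrals and to verify that the exponents permit its application. From Corollary \ref{fBmdecompcoro}, the process $Y$ admits a continuous modification whose $p$-variation on $[0,T]$ is finite almost surely for every $p>\frac{1}{\alpha+1/2}$; since $0<\alpha<1/2$ we have $\frac{1}{\alpha+1/2}\in(1,2)$, so in particular $Y$ has finite $p$-variation for some $p<2$. Because $f'$ is Hölder continuous of order $\beta$, the composition $s\mapsto f'(Y_s)$ satisfies $\left|f'(Y_t)-f'(Y_s)\right|\le C\left|Y_t-Y_s\right|^{\beta}$ along any partition, whence $f'(Y)$ has finite $q$-variation with $q=p/\beta$. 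By the classical result of \citet{Young36}, the integral $\int_0^T f'(Y_s)\,dY_s$ then exists as a pathwise limit of Riemann sums, independent of the partition, provided $\frac1p+\frac1q>1$, i.e. $p<1+\beta$. The threshold on $\beta$ in the statement is exactly what makes the two requirements $p>\frac{1}{\alpha+1/2}$ and $p<1+\beta$ simultaneously satisfiable, so I would first fix one such $p$.

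Next I would establish the identity through a telescoping and Taylor argument. For a partition $0=t_0<\cdots<t_n=T$ with mesh tending to zero, the mean value theorem gives
\[
f(Y_{t_{i+1}})-f(Y_{t_i})=f'(Y_{t_i})\,(Y_{t_{i+1}}-Y_{t_i})+R_i,
\]
where $R_i=[f'(\xi_i)-f'(Y_{t_i})](Y_{t_{i+1}}-Y_{t_i})$ for some $\xi_i$ lying between $Y_{t_i}$ and $Y_{t_{i+1}}$. Summing telescopes the left-hand side to $f(Y_T)-f(Y_0)$, so it remains to show that the Riemann sum $\sum_i f'(Y_{t_i})(Y_{t_{i+1}}-Y_{t_i})$ converges to the Young integral (immediate from its definition and the existence just invoked) and that $\sum_i R_i\to0$ almost surely.

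For the remainder I would use the Hölder bound $\left|R_i\right|\le C\left|Y_{t_{i+1}}-Y_{t_i}\right|^{1+\beta}$ together with
\[
\sum_i\left|Y_{t_{i+1}}-Y_{t_i}\right|^{1+\beta}\le\Big(\max_i\left|Y_{t_{i+1}}-Y_{t_i}\right|\Big)^{1+\beta-p}\sum_i\left|Y_{t_{i+1}}-Y_{t_i}\right|^{p}.
\]
Since $1+\beta-p>0$ by the choice of $p$, the continuity of $Y$ forces $\max_i\left|Y_{t_{i+1}}-Y_{t_i}\right|\to0$ as the mesh shrinks, while $\sum_i\left|Y_{t_{i+1}}-Y_{t_i}\right|^{p}$ stays bounded by the finite $p$-variation of $Y$; hence the right-hand side tends to zero. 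Therefore $\sum_i R_i\to0$, and passing to the limit yields the stated formula.

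The main obstacle is essentially bookkeeping with the exponents: one must confirm that the window $\frac{1}{\alpha+1/2}<p<1+\beta$ is nonempty under the hypothesis on $\beta$, and that a single $p$ in this window simultaneously ensures both existence of the Young integral (via $\frac1p+\frac1q>1$) and the vanishing of the higher-order remainder (via $1+\beta>p$). Both reduce to the same inequality $\beta>\frac{1}{\alpha+1/2}-1$, so the argument is self-consistent. The only remaining care is to fix the almost-sure event on which $Y$ has finite $p$-variation and a continuous modification, and to carry out the entire limiting argument on that event.
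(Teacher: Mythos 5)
Your proposal is correct and follows essentially the same route as the paper: it invokes the finite $p$-variation of $Y$ for $p>\frac{1}{\alpha+1/2}$ from Corollary \ref{fBmdecompcoro}, applies Young's existence theorem to $\int_0^T f'(Y_s)\,dY_s$, and establishes the change-of-variable formula by the standard Taylor-plus-interpolation estimate on the remainder, which is exactly the argument the paper sketches before the theorem. Note that you have (rightly) read the hypothesis as $\beta>\frac{1}{\alpha+1/2}-1$; the exponent $\frac{1}{\alpha-1/2}-1$ printed in the statement is negative for $0<\alpha<1/2$ and is evidently a sign typo, since only the corrected condition makes the window $\frac{1}{\alpha+1/2}<p<1+\beta$ nonempty.
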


\paragraph{Itô's formula based on Malliavin calculus}

In this part, for simplicity, we will assume that $L\left(0\right)=1$
and $\sigma\equiv1$. Due to Theorem \ref{mainthm}, we can define
stochastic integrals (with random integrands) with respect to $Y$
as follows: under the assumptions of Theorem \ref{mainthm}, we have
that for any $t\geq0$
\[
Y_{t}=U_{t}+\widetilde{B}_{t}^{\alpha},
\]
with $U$ an absolutely continuous process and $\widetilde{B}_{t}^{\alpha}:=\int_{0}^{t}\left(t-s\right)^{\alpha}dB_{s}.$
Thus, if $\mathcal{F=\sigma}\left(\widetilde{B}^{\alpha}\right)$
and $\left(Z_{t}\right)_{0\leq t\leq T}$ is a continuous $\mathcal{F}_{T}$-measurable
process, we define the Skorohod integral of $Z$ with respect to $Y$
as
\begin{equation}
\int_{0}^{T}Z_{s}\delta Y_{s}:=\int_{0}^{T}Z_{s}dU_{s}+\int_{0}^{T}Z_{s}\delta\widetilde{B}_{s}^{\alpha},\label{eqnchap11.19}
\end{equation}
where, in the nomenclature of \citet{Nualart06}, $\int_{0}^{T}Z_{s}\delta\widetilde{B}_{s}^{\alpha}$
represents the divergence operator evaluated at $Z.$ Let $f:\mathbb{R\rightarrow}\mathbb{R}$
be a twice continuously differentiable function. Then, by Taylor's
Theorem, for $0=t_{0}^{n}<t_{1}^{n}<\cdots<t_{n}^{n}=T$ with $\sup\left\vert t_{i+1}^{n}-t_{i}^{n}\right\vert \rightarrow0$
as $n\rightarrow\infty,$ we get
\begin{eqnarray*}
f\left(Y_{T}\right)-f\left(Y_{0}\right) & = & \sum\limits _{i=0}^{n-1}f^{\prime}\left(Y_{t_{i}^{n}}\right)\left(Y_{t_{i+1}^{n}}-Y_{t_{i}^{n}}\right)\\
 &  & +\frac{1}{2}\sum\limits _{i=0}^{n-1}f^{\prime\prime}\left(Y_{t_{i}^{n}}\right)\left(Y_{t_{i+1}^{n}}-Y_{t_{i}^{n}}\right)^{2}+\sum\limits _{i=0}^{n-1}R\left(Y_{t_{i+1}^{n}},Y_{t_{i}^{n}}\right)
\end{eqnarray*}
where, from Theorem 1 and Proposition 5 in \citet{AlosMazerNualart01},
as $n\rightarrow\infty$ the series $\sum\limits _{i=0}^{n-1}f^{\prime}\left(Y_{t_{i}^{n}}\right)\left(Y_{t_{i+1}^{n}}-Y_{t_{i}^{n}}\right)$
converges to 
\begin{equation}
\int_{0}^{T}f^{\prime}\left(Y_{s}\right)\delta Y_{s}+\alpha\int_{0}^{T}\int_{s}^{T}D_{s}f^{\prime}\left(Y_{u}\right)\left(u-s\right)^{\alpha-1}duds,\label{eqnchap11.19.0}
\end{equation}
in $\mathcal{L}^{2}\left(\Omega,\mathcal{F},\mathbb{P}\right)$, provided
that for some constants $c>0$ and for every $\zeta>0$ such that
$\zeta<\left(4\int_{0}^{\infty}\varphi_{\alpha}^{2}\left(s\right)ds\right)^{-1}$,
it holds that 
\begin{equation}
\max\left\{ \left\vert f\left(x\right)\right\vert ,\left\vert f^{\prime}\left(x\right)\right\vert ,\left\vert f^{\prime\prime}\left(x\right)\right\vert \right\} \leq ce^{\zeta\left\vert x\right\vert ^{2}},\label{eqnchap11.20}
\end{equation}
plus the technical condition 
\begin{equation}
\lim_{n\rightarrow\infty}\int_{0}^{T}\sup_{u,v\in\left(s,s+1/n\right]\cap\left[0,T\right]}\mathbb{E}\left[\left\vert D_{s}f^{\prime}\left(Y_{u}\right)-D_{s}f^{\prime}\left(Y_{v}\right)\right\vert ^{2}\right]ds=0,\label{eqnchap11.21}
\end{equation}
with $D_{s}$ denoting the Malliavin derivative induced by $\widetilde{B}^{\alpha}$.
See for instance \citet{Nualart06}. On the other hand, from (\ref{eqnchap11.20})
and Corollary \ref{fBmdecompcoro}, for $\alpha>0$ we get that $\sum\limits _{i=0}^{n-1}f^{\prime\prime}\left(Y_{t_{i}^{n}}\right)\left(Y_{t_{i+1}^{n}}-Y_{t_{i}^{n}}\right)^{2}\rightarrow0$
in $\mathcal{L}^{1}\left(\Omega,\mathcal{F},\mathbb{P}\right)$. Furthermore,
by standard arguments, it can be shown that the remaining term $\sum\limits _{i=0}^{n-1}R\left(Y_{t_{i+1}^{n}},Y_{t_{i}^{n}}\right)$
tends to $0$ in $\mathcal{L}^{1}\left(\Omega,\mathcal{F},\mathbb{P}\right)$.
Hence, as a corollary to Theorem \ref{mainthm} and \citet{AlosMazerNualart01}
we have that

\begin{theorem} Let $Y$ be as in (\ref{eqnchap11.1-1}) with $0<\alpha<1/2$
and $\sigma\equiv1$. For every $f$ of class $C^{2}$ satisfying
(\ref{eqnchap11.20}) and (\ref{eqnchap11.21}), we have that the
following Itô's formula applies
\begin{equation}
f\left(Y_{T}\right)=f\left(Y_{0}\right)+\int_{0}^{T}f^{\prime}\left(Y_{s}\right)\delta Y_{s}+\alpha\int_{0}^{T}\int_{s}^{T}D_{s}f^{\prime}\left(Y_{u}\right)\left(u-s\right)^{\alpha-1}duds,\label{itomalliavin}
\end{equation}
where $\int_{0}^{T}f^{\prime}\left(Y_{s}\right)\delta Y_{s}$ is understood
as in (\ref{eqnchap11.19}). \end{theorem}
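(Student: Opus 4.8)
The plan is to build everything on the decomposition $Y_{t}=U_{t}+\widetilde{B}_{t}^{\alpha}$, valid for $t\geq0$, which is precisely Corollary \ref{fBmdecompcoro} specialized to $L(0)=1$ and $\sigma\equiv1$: the increment-semimartingale part $A$ vanishes on $[0,\infty)$, the volatility-modulated fBm reduces to the Riemann--Liouville process $\widetilde{B}_{t}^{\alpha}=\int_{0}^{t}(t-s)^{\alpha}dB_{s}$, and all remaining pieces are absorbed into the absolutely continuous process $U$. Fixing partitions $0=t_{0}^{n}<\cdots<t_{n}^{n}=T$ with vanishing mesh, I would start from the second-order Taylor expansion already displayed before the statement,
\[
f(Y_{T})-f(Y_{0})=\sum_{i}f'(Y_{t_{i}^{n}})(Y_{t_{i+1}^{n}}-Y_{t_{i}^{n}})+\tfrac{1}{2}\sum_{i}f''(Y_{t_{i}^{n}})(Y_{t_{i+1}^{n}}-Y_{t_{i}^{n}})^{2}+\sum_{i}R(Y_{t_{i+1}^{n}},Y_{t_{i}^{n}}),
\]
and identify the limit of each of the three sums as $n\to\infty$.

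For the first sum I would split the increments of $Y$ into those of $\widetilde{B}^{\alpha}$ and those of $U$. On the $\widetilde{B}^{\alpha}$ part, the decisive input is Theorem 1 together with Proposition 5 of \citet{AlosMazerNualart01}: under the exponential growth bound (\ref{eqnchap11.20}) and the equicontinuity condition (\ref{eqnchap11.21}), the anticipating Riemann sums converge in $\mathcal{L}^{2}(\Omega,\mathcal{F},\mathbb{P})$ to the divergence integral $\int_{0}^{T}f'(Y_{s})\delta\widetilde{B}_{s}^{\alpha}$ plus the Malliavin trace term $\alpha\int_{0}^{T}\int_{s}^{T}D_{s}f'(Y_{u})(u-s)^{\alpha-1}du\,ds$, the weight $\alpha(u-s)^{\alpha-1}$ being exactly $\partial_{u}(u-s)^{\alpha}$. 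On the $U$ part, since $U$ has finite variation and $f'(Y_{\cdot})$ is continuous, $\sum_{i}f'(Y_{t_{i}^{n}})(U_{t_{i+1}^{n}}-U_{t_{i}^{n}})$ converges pathwise to the Lebesgue--Stieltjes integral $\int_{0}^{T}f'(Y_{s})dU_{s}$. Adding the two contributions and invoking the definition (\ref{eqnchap11.19}) of $\int_{0}^{T}f'(Y_{s})\delta Y_{s}$ reproduces the first integral and the trace correction in (\ref{itomalliavin}), i.e. the limit (\ref{eqnchap11.19.0}).

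For the remaining two sums I would use Corollary \ref{fBmdecompcoro}(2): since $\alpha>0$ the quadratic variation of $Y$ over $[0,T]$ vanishes, so, controlling $f''(Y_{t_{i}^{n}})$ through the growth bound (\ref{eqnchap11.20}) and a Cauchy--Schwarz/uniform-integrability argument against $\sum_{i}(Y_{t_{i+1}^{n}}-Y_{t_{i}^{n}})^{2}\to0$, the second sum tends to $0$ in $\mathcal{L}^{1}(\Omega,\mathcal{F},\mathbb{P})$. The remainder sum is of smaller order than the quadratic one: by continuity of $f''$ and the H\"older regularity of the paths of $Y$ from Corollary \ref{fBmdecompcoro}(1), $\max_{i}|f''(\eta_{i})-f''(Y_{t_{i}^{n}})|\to0$ along intermediate points $\eta_{i}$, which, multiplied by the bounded $\sum_{i}(Y_{t_{i+1}^{n}}-Y_{t_{i}^{n}})^{2}$, forces $\mathcal{L}^{1}$-convergence to $0$. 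Passing to the limit in the Taylor expansion then yields (\ref{itomalliavin}).

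The hard part will be the first step: establishing the $\mathcal{L}^{2}$-convergence of the anticipating Riemann sums and, above all, pinning down the trace term, which is exactly where the Malliavin-calculus machinery of \citet{AlosMazerNualart01} and conditions (\ref{eqnchap11.20})--(\ref{eqnchap11.21}) enter. A secondary delicate point, which I would make explicit rather than gloss over, is measurability: for $D_{s}$ (induced by $\widetilde{B}^{\alpha}$, cf. \citet{Nualart06}) to act on $f'(Y_{u})$ one must clarify the status of $U$, so I would either verify the Malliavin differentiability of $Y$ with respect to $\widetilde{B}^{\alpha}$ or treat $U$ as an absolutely continuous perturbation annihilated by $D_{s}$, thereby reducing the verification of (\ref{eqnchap11.21}) to continuity and growth estimates on $f''$ composed with $Y$.
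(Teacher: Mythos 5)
Your proposal follows essentially the same route as the paper: the decomposition $Y_{t}=U_{t}+\widetilde{B}_{t}^{\alpha}$ from Theorem \ref{mainthm}, a second-order Taylor expansion along partitions, Theorem 1 and Proposition 5 of \citet{AlosMazerNualart01} for the anticipating Riemann sums under (\ref{eqnchap11.20}) and (\ref{eqnchap11.21}), and the vanishing quadratic variation from Corollary \ref{fBmdecompcoro} for the second-order and remainder terms. Your explicit splitting of the first sum into the $\widetilde{B}^{\alpha}$ and $U$ contributions, and your remark on how $D_{s}$ acts on the absolutely continuous part, only make explicit what the paper leaves implicit in its definition (\ref{eqnchap11.19}).
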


\begin{remark} Note that the formula in (\ref{itomalliavin}) differs
from the one in \citet{AlosMazerNualart01} by the term 
\[
\alpha\int_{0}^{T}\int_{s}^{T}D_{s}f^{\prime}\left(Y_{u}\right)\left(u-s\right)^{\alpha-1}duds.
\]
 However, if we define 
\begin{equation}
\int_{0}^{T}Z_{s}dY_{s}:=\int_{0}^{T}Z_{s}\delta Y_{s}+\int_{0}^{T}D_{s}\left\{ \mathcal{K}_{K}\left[Y\left(T,s\right)\right]\right\} ds,\label{eqnchap11.22}
\end{equation}
where $\mathcal{K}_{K}$ is the operator in (\ref{eqnchap11.11})
with $K\left(t,s\right)=\left(t-s\right)^{\alpha},$ then (\ref{itomalliavin})
can be written as
\[
f\left(Y_{T}\right)=f\left(Y_{0}\right)+\int_{0}^{T}f^{\prime}\left(Y_{s}\right)dY_{s}.
\]
Thus, the integral in (\ref{eqnchap11.22}) coincides with that in
\citet{BNBEnthPedVeraat14}. \end{remark}

\begin{remark} Observe that (\ref{eqnchap11.19.0}) still holds for
$0>\alpha>-1/4$. Nevertheless, at this point, it is not clear what
is the limit behavior of $\sum\limits _{i=0}^{n-1}R\left(Y_{t_{i+1}^{n}},Y_{t_{i}^{n}}\right)$
in this case.\end{remark}

\section{Conclusions}

In this paper, by using Wiener-type stochastic integral for volatility
modulated Volterra processes on the real line, we have decomposed
a subclass of $\mathcal{BSS}$ as a sum of a fractional Brownian motion
and an absolutely continuous process. We exploit this decomposition
in order to obtain the index of Hölder continuity of the $\mathcal{BSS}$
of interest. Furthermore, we derived Itô's formula in the case when
$1/2>\alpha>0$. 

\subsubsection*{Acknowledgement}

The author wishes to thank Jan Pedersen for fruitful ideas and discussions
which made the present work possible. Further, he gratefully acknowledges
Ole E. Barndorff-Nielsen, Benedykt Szozda and Lorenzo Boldrini for
helpful comments on an earlier draft of this article.

\section{Appendix}

\subsection*{Stochastic Fubini Theorem}

Our main result is based mainly on the stochastic Fubini theorem for
semimartingales. In the following, we present a review of some conditions
for which this theorem holds. See \citep{Veraar12} for a detailed
discussion and \citep{BasseBN11} for generalizations.

Let $T$ be an interval on $\mathbb{R}$ and $\left(\mathcal{X},\mathcal{B},\mu\right)$
be a $\sigma$-finite measure space. Consider a real-valued random
field $\psi:\Omega\times T\times\mathcal{X\rightarrow}\mathbb{R}$.
Assume that $\psi$ is jointly measurable and adapted on $T$. Let
$\left(S_{t}\right)_{t\in T}$ be a continuous semimartingale on $\left(\Omega,\mathcal{F},\left(\mathcal{F}_{t}\right)_{t\in\mathbb{R}},\mathbb{P}\right)$.
The stochastic Fubini theorem gives necessary conditions for which
almost surely
\begin{equation}
\int_{\mathcal{X}}\int_{T}\psi\left(t,x\right)dS_{t}\mu\left(dx\right)=\int_{T}\int_{\mathcal{X}}\psi\left(t,x\right)\mu\left(dx\right)dS_{t},\label{eqnchap11.5}
\end{equation}
in which implicitly is also concluded that the Lebesgue and Itô integrals
exist. Let $\left(M,A\right)$ be the canonical decomposition of $S$,
i.e. $M$ is a continuous local martingale, $A$ a continuous process
of bounded variation and $S=M+A$. We have that (\ref{eqnchap11.5})
holds under the assumptions of the following theorem whose proof can
be found in \citep{Veraar12}:

\begin{theorem}[Stochastic Fubini Theorem] \label{StFubiniThm}Let
$\psi:\Omega\times T\times\mathcal{X\rightarrow}\mathbb{R}$ be progressively
measurable on $T$ and assume that almost surely 
\begin{eqnarray}
 &  & \int_{\mathcal{X}}\left(\int_{T}\psi^{2}\left(s,x\right)d\left[M\right]_{s}\right)^{1/2}\mu\left(dx\right)<\infty;\label{eqnchap11.6.1}\\
 &  & \int_{\mathcal{X}}\int_{T}\psi^{2}\left(s,x\right)\left\vert dA\right\vert _{s}\mu\left(dx\right)<\infty,\label{eqnchap11.6.2}
\end{eqnarray}
where $\left[M\right]$ denotes the quadratic variation of $M$ and
$\left\vert dA\right\vert $ the total variation of the Lebesgue-Stieltjes
measure associated to $A$. Then (\ref{eqnchap11.5}) holds almost
surely . \end{theorem}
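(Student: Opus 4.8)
The plan is to use the canonical decomposition $S=M+A$ and the linearity of every integral appearing in (\ref{eqnchap11.5}), thereby splitting the claim into two independent statements: the identity for the continuous local martingale $M$ under hypothesis (\ref{eqnchap11.6.1}), and the identity for the bounded variation part $A$ under hypothesis (\ref{eqnchap11.6.2}). The finite variation half is the routine one. There $\int_{T}\psi(t,x)\,dA_{t}$ is, for each fixed $x$ and $\omega$, an ordinary Lebesgue--Stieltjes integral against the signed measure $dA$; hypothesis (\ref{eqnchap11.6.2}), combined with a Cauchy--Schwarz estimate and localization, shows that $\psi$ is integrable against the product measure $\mu\otimes|dA|$ on $\mathcal{X}\times T$ for almost every $\omega$. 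The classical Fubini--Tonelli theorem then delivers the interchange pathwise, with $A$ in place of $S$.

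The martingale part is the heart of the argument. First I would localize: introduce stopping times $\tau_{n}\uparrow\infty$ arranged so that $[M]_{\tau_{n}}$ is bounded and the left-hand side of (\ref{eqnchap11.6.1}) is bounded by $n$ on $[0,\tau_{n}]$. This converts the merely almost-sure finiteness in the hypotheses into genuine $\mathcal{L}^{2}(\mathbb{P})$ bounds, so that honest It\^o isometries become available. On the localized picture, the structural fact I would establish next is that the family $x\mapsto\int_{T}\psi(t,x)\,dM_{t}$ admits a version that is jointly measurable in $(\omega,x)$. This follows by approximating each stochastic integral by Riemann-type sums, which are manifestly jointly measurable, and observing that convergence in probability preserves joint measurability up to modification; without this step the outer integral $\int_{\mathcal{X}}(\cdots)\mu(dx)$ on the left of (\ref{eqnchap11.5}) would not even be meaningful.

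The equality itself I would obtain by approximation. I would choose simple integrands $\psi_{n}(t,x)=\sum_{k}f_{k}(t)\mathbf{1}_{B_{k}}(x)$, with $f_{k}$ simple predictable and $B_{k}\in\mathcal{B}$, converging to $\psi$ in the mixed norm $\int_{\mathcal{X}}(\mathbb{E}\int_{[0,\tau_{n}]}\psi^{2}\,d[M])^{1/2}\mu(dx)$ dictated by the localized form of (\ref{eqnchap11.6.1}). For each such $\psi_{n}$ the Fubini identity holds trivially by linearity. To pass to the limit I would invoke Minkowski's integral inequality together with the It\^o isometry: the $\mathcal{L}^{2}(\mathbb{P})$ distance between $\int_{\mathcal{X}}\int_{T}\psi\,dM\,\mu(dx)$ and $\int_{\mathcal{X}}\int_{T}\psi_{n}\,dM\,\mu(dx)$ is dominated by $\int_{\mathcal{X}}(\mathbb{E}\int_{T}(\psi-\psi_{n})^{2}\,d[M])^{1/2}\mu(dx)$, and the same quantity controls the distance between the two iterated-integral sides; both vanish as $n\to\infty$. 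Finally, letting the localizing sequence $\tau_{n}\uparrow\infty$ recovers (\ref{eqnchap11.5}) almost surely.

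The main obstacle I expect is the reconciliation of the pathwise, almost-sure nature of (\ref{eqnchap11.6.1}) and (\ref{eqnchap11.6.2}) with the $\mathcal{L}^{2}$ Hilbert-space machinery underlying the It\^o integral. One cannot simply take expectations, since Jensen's inequality runs the wrong way for $\mathbb{E}[(\int\psi^{2}\,d[M])^{1/2}]$ versus $(\mathbb{E}\int\psi^{2}\,d[M])^{1/2}$, so the hypotheses do not directly furnish the $\mathcal{L}^{2}(\mathbb{P})$ bounds the isometry demands. The localization step is precisely what bridges this gap, and verifying that the approximation in the mixed space $\mathcal{L}^{1}(\mu;\mathcal{L}^{2}(\mathbb{P}))$ is available (rather than only in $\mathcal{L}^{2}(\mu\otimes\mathbb{P})$) carries the real technical weight, together with the joint-measurability point, since the It\^o integral is defined only up to null sets for each fixed $x$.
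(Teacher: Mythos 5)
The paper does not prove this theorem: it is quoted verbatim from the literature, with the proof deferred to the cited reference \citep{Veraar12}. Your outline reproduces essentially the standard argument given there --- splitting along the canonical decomposition $S=M+A$, handling $A$ by the classical Fubini--Tonelli theorem, and handling $M$ by localization via stopping times that turn the almost-sure bound (\ref{eqnchap11.6.1}) into an $\mathcal{L}^{2}(\mathbb{P})$ bound, followed by approximation with simple integrands in the mixed norm $\mathcal{L}^{1}(\mu;\mathcal{L}^{2}(\mathbb{P}))$ controlled through Minkowski's integral inequality and the It\^o isometry --- so it is correct and in line with the source the paper relies on.
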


\begin{remark} \label{remarkfubini}Observe that when $\mu$ is finite, (\ref{eqnchap11.6.1})
in the previous theorem can be replaced by 
\begin{equation}
\int_{\mathcal{X}}\int_{T}\psi^{2}\left(s,x\right)d\left[M\right]_{s}\mu\left(dx\right)<\infty\text{.}\label{eqnchap11.7}
\end{equation}
For the particular case of the Brownian motion it is equivalent to
\begin{equation}
\int_{\mathcal{X}}\left(\int_{T}\psi^{2}\left(s,x\right)ds\right)^{1/2}\mu\left(dx\right)<\infty\text{.}\label{eqnchap11.8}
\end{equation}
Finally, when $\psi$ is square integrable, (\ref{eqnchap11.6.1})
can be replaced by 
\begin{equation}
\int_{\mathcal{X}}\left[\mathbb{E}\left(\int_{T}\psi^{2}\left(s,x\right)d\left[M\right]_{s}\right)\right]^{1/2}\mu\left(dx\right)<\infty,\label{eqnchap11.7.1}
\end{equation}
or 
\begin{equation}
\int_{\mathcal{X}}\mathbb{E}\left(\int_{T}\psi^{2}\left(s,x\right)d\left[M\right]_{s}\right)\mu\left(dx\right)<\infty,\label{eqnchap11.8.1}
\end{equation}
when $\mu$ is finite. \end{remark}
The following result is a version of Theorem \ref{StFubiniThm} into the context of $\mathcal{VMVP}$.
\begin{lemma}\label{lemmafubini}Let $(\mathcal{X},\mathcal{G})$ be a measurable space and $\mu$ a $\sigma$-finite measure on $\mathcal{G}\otimes\mathcal{B}(\mathbb{R})$. Consider $(X_t)_{t\in \mathbb{R}}$ to be an $\mathcal{VMVP}$ satisfying the assumptions of Section 2. If $f:\mathcal{X}\times\mathbb{R}\rightarrow \mathbb{R}$ is a measurable function fulfilling the condition
	\[ \int_{\mathcal{X}}\int_{\mathbb{R}}\arrowvert f(x,s)\lvert\left(\int_{-\infty}^{t}K(s,u)^2du\right)^{1/2}\mu(dxds)<\infty, \]
	then almost surely $\int_{\mathcal{X}}\int_{\mathbb{R}}\arrowvert f(x,s)X_s\lvert\mu(dxds)<\infty$ and
	\[ \int_{\mathcal{X}}\int_{\mathbb{R}} f(x,s)X_s\mu(dxds)=\int_{\mathbb{R}}\int_{\mathcal{X}}K(s,r)f(x,s)\mu(dxds)\sigma_rdB_r. \]
\end{lemma}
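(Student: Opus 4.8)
The plan is to read the asserted identity as a single application of the Stochastic Fubini Theorem (Theorem \ref{StFubiniThm}) to the continuous martingale $B^{\sigma}$, with $dB^{\sigma}_r=\sigma_r\,dB_r$, over the product space $(\mathcal{X}\times\mathbb{R},\mathcal{G}\otimes\mathcal{B}(\mathbb{R}),\mu)$. First I would use that $K(s,r)=0$ for $r>s$ to write the $\mathcal{VMVP}$ as an integral over the whole line, $X_s=\int_{\mathbb{R}}K(s,r)\,dB^{\sigma}_r$, so that
\[
\int_{\mathcal{X}\times\mathbb{R}}f(x,s)X_s\,\mu(dx\,ds)=\int_{\mathcal{X}\times\mathbb{R}}\Big(\int_{\mathbb{R}}K(s,r)f(x,s)\,dB^{\sigma}_r\Big)\mu(dx\,ds),
\]
and the claim is exactly the interchange of $\int_{\mathcal{X}\times\mathbb{R}}\cdots\mu(dx\,ds)$ with the stochastic integral $\int_{\mathbb{R}}\cdots dB^{\sigma}_r$.

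To set up Theorem \ref{StFubiniThm}, I would take the random field $\psi(r,(x,s)):=K(s,r)f(x,s)$, with stochastic-integration variable $r$ and measure-space variable $(x,s)$. Since $\psi$ is non-random, its progressive measurability is automatic and joint measurability follows from that of $K$ and $f$. The driving semimartingale is $S=B^{\sigma}$, whose canonical decomposition has no bounded-variation part ($A\equiv0$) and satisfies $d[B^{\sigma}]_r=\sigma_r^2\,dr$; consequently condition (\ref{eqnchap11.6.2}) is vacuous and only (\ref{eqnchap11.6.1}) needs verifying.

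The one substantive point, and the main obstacle, is that the hypothesis is deterministic whereas (\ref{eqnchap11.6.1}) is a pathwise requirement through the factor $\sigma_r^2$ sitting inside $d[B^{\sigma}]_r$. I would circumvent this by using the expectation form (\ref{eqnchap11.7.1}) of the condition permitted in Remark \ref{remarkfubini}, together with the $\mathcal{L}^2$-boundedness of $\sigma$. With $M:=\sup_r\mathbb{E}(\sigma_r^2)<\infty$ and $K(s,r)=0$ for $r>s$,
\[
\Big[\mathbb{E}\Big(\int_{\mathbb{R}}\psi(r,(x,s))^2\,d[B^{\sigma}]_r\Big)\Big]^{1/2}=|f(x,s)|\Big(\int_{-\infty}^{s}K(s,r)^2\,\mathbb{E}(\sigma_r^2)\,dr\Big)^{1/2}\le M^{1/2}|f(x,s)|\Big(\int_{-\infty}^{s}K(s,r)^2\,dr\Big)^{1/2}.
\]
Integrating this against $\mu$ gives a bound by $M^{1/2}$ times the inner factor appearing in the hypothesis (recall $K(s,\cdot)$ vanishes on $(s,\infty)$, so for $s\le t$ the two upper limits $s$ and $t$ yield the same value), which is finite; hence (\ref{eqnchap11.7.1}) holds.

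Theorem \ref{StFubiniThm}, in the form sanctioned by Remark \ref{remarkfubini}, then delivers the interchange, which is the claimed equality; and since, as recorded just after the statement of that theorem, the validity of (\ref{eqnchap11.5}) entails the a.s.\ existence of both iterated integrals, we obtain in particular $\int_{\mathcal{X}\times\mathbb{R}}|f(x,s)X_s|\,\mu(dx\,ds)<\infty$ almost surely. The only routine bookkeeping left is that the stochastic integral runs over the semi-infinite interval $(-\infty,t]$, which is already covered by the increment-semimartingale integration theory invoked elsewhere in the paper.
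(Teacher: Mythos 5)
Your argument is correct and is precisely the one the paper leaves implicit: the lemma is stated in the appendix with no proof beyond the remark that it is "a version of Theorem \ref{StFubiniThm} in the context of $\mathcal{VMVP}$", and your application of that theorem to $S=B^{\sigma}$ with $\psi(r,(x,s))=K(s,r)f(x,s)$, passing from the deterministic hypothesis to the pathwise condition (\ref{eqnchap11.6.1}) via the expectation form (\ref{eqnchap11.7.1}) and $\sup_r\mathbb{E}(\sigma_r^2)<\infty$, is exactly the intended route. You also correctly flag the two points the statement glosses over — the $t$ versus $s$ upper limit (harmless since $K(s,\cdot)$ vanishes on $(s,\infty)$, provided $\mu$ is supported where $s\le t$ as in all applications) and the semi-infinite integration interval — so nothing is missing.
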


\subsection*{Proof of Proposition \ref{stationarity}}

In this part we give a proof of Proposition \ref{stationarity}. We
would like to emphasize that the proof, and consequently the Proposition,
is valid for any Lévy process.

\begin{proof}{[}Proof of Proposition \protect\ref{stationarity}{]}
As it was mentioned in the beginning of this paper, for the $\mathcal{BSS}$
process to be well defined, we must assume that $\int_{0}^{\infty}\phi\left(s\right)^{2}\sigma_{s+t}^{2}ds<\infty$
almost surely for any $t\in\mathbb{R}$. Firstly, let us show that
for all $h\in\mathbb{R}$, almost surely
\begin{equation}
\int_{-\infty}^{t+h}\phi\left(t+h-s\right)\sigma_{s}dB_{s}=\int_{-\infty}^{t}\phi\left(t-s\right)\sigma_{s+h}dB_{s}^{h}.\label{eqnchap11}
\end{equation}
Indeed, since $\sigma$ is predictable and $\phi$ is measurable,
without loss of generality we may and do assume that $\varphi_{\alpha}$
is non-negative and 
\[
\sigma_{t}=\sum\limits _{i=1}^{N}a_{i}\mathbf{1}_{\left(u_{i},t_{i}\right]}\left(t\right)X_{i}\text{, \ \ }t\in\mathbb{R}\text{,}
\]
where $a_{i}\in\mathbb{R}$, $-\infty<u_{i}<t_{i}<\infty$ and $X_{i}\in\mathcal{F}_{u_{i}}$.
Hence, we can choose a sequence of simple functions $\left(\phi^{n}\right)_{n\in\mathbb{N}}$
with $\phi^{n}\uparrow\phi$ and 
\[
\phi^{n}\left(s\right)=\sum\limits _{i=1}^{m_{n}}b_{i}^{n}\mathbf{1}_{\left\{ p_{i}\leq s<q_{i}\right\} },\text{ \ \ }s\in\mathbb{R}\text{,}
\]
where $b_{i}^{n}\in\mathbb{R}^{+}$ and $0\leq p_{i}<q_{i}<\infty$.
We see that $\int_{-\infty}^{t}\phi^{n}\left(t-s\right)\sigma_{s}dB_{s}\overset{\mathbb{P}}{\rightarrow}Y_{t}$
for the reason that $\phi^{n}\left(t-\cdot\right)\sigma$ is a sequence
of simple predictable processes converging to $\varphi_{\alpha}\left(t-\cdot\right)\sigma$.
Observe that 
\[
\int_{-\infty}^{t+h}\phi^{n}\left(t+h-s\right)\sigma_{s}dB_{s}=\sum\limits _{i=1}^{N}\sum\limits _{j=1}^{m_{n}}a_{i}b_{j}^{n}X_{i}\int_{-\infty}^{t+h}\mathbf{1}_{\left\{ u_{i}\vee t+h-q_{i}<s\leq t_{i}\wedge t+h-p_{i}\right\} }dB_{s},
\]
and 
\[
\int_{-\infty}^{t}\phi^{n}\left(t-s\right)\sigma_{s+h}dB_{s}^{h}=\sum\limits _{i=1}^{N}\sum\limits _{j=1}^{m_{n}}a_{i}b_{j}^{n}X_{i}\int_{-\infty}^{t}\mathbf{1}_{\left\{ u_{i}-h\vee t-q_{i}<s\leq t_{i}-h\wedge t-p_{i}\right\} }dB_{s}^{h}.
\]
Since
\begin{eqnarray*}
\int_{-\infty}^{t+h}\mathbf{1}_{\left\{ u_{i}\vee t+h-q_{i}<s\leq t_{i}\wedge t+h-p_{i}\right\} }dB_{s} & = & B_{t_{i}\wedge t+h-p_{i}}-B_{u_{i}\vee t+h-q_{i}}\\
 & = & \int_{-\infty}^{t}\mathbf{1}_{\left\{ u_{i}-h\vee t-q_{i}<s\leq t_{i}-h\wedge t-p_{i}\right\} }dB_{s}^{h},
\end{eqnarray*}
we get 
\[
\int_{-\infty}^{t+h}\phi^{n}\left(t+h-s\right)\sigma_{s}dB_{s}=\int_{-\infty}^{t}\phi^{n}\left(t-s\right)\sigma_{s+h}dB_{s}^{h}\text{.}
\]
Taking limits in the last equation we obtain (\ref{eqnchap11}).

Thanks to (\ref{eqnchap11}), in order to finish the proof, we only
need to prove that 
\[
\left(\int_{-\infty}^{t}\phi\left(t-s\right)\sigma_{s+h}dB_{s}^{h}\right)_{t\in\mathbb{R}}\overset{d}{=}\left(\int_{-\infty}^{t}\phi\left(t-s\right)\sigma_{s}dB_{s}\right)_{t\in\mathbb{R}}\text{.}
\]
Since $\sigma$ is càglàd, we may and do assume that it is bounded,
therefore for any $h\in\mathbb{R}$ 
\begin{eqnarray*}
\phi\left(t-s\right)\sigma_{s+h} & = & \lim_{n\rightarrow\infty}\sum\limits _{k\in\mathbb{Z}}\phi^{n}\left(t-s\right)\sigma_{\frac{k}{2^{n}}+h}\mathbf{1}_{\left\{ \frac{k}{2^{n}}<s\leq\frac{k+1}{2^{n}}\right\} }\\
 & = & :\lim_{n\rightarrow\infty}\psi_{h}^{n}\left(t-s\right)\text{,}
\end{eqnarray*}
where the limit is pointwise. Note that $\left\vert \psi_{h}^{n}\right\vert \leq\phi\left(t-\cdot\right)$,
this jointly with (\ref{eqnchap11}) give $\int_{-\infty}^{t}\psi_{h}^{n}\left(t-s\right)dB_{s}^{h}\overset{\mathbb{P}}{\rightarrow}Y_{t+h}$.
It only remains to show that 
\[
\left(\int_{-\infty}^{t}\psi_{h}^{n}\left(t-s\right)dB_{s}^{h}\right)_{t\in\mathbb{R}}\overset{d}{=}\left(\int_{-\infty}^{t}\psi_{0}^{n}\left(t-s\right)dB_{s}\right)_{t\in\mathbb{R}}\text{.}
\]
Indeed, if it were true, we would have that $\int_{-\infty}^{.}\psi_{h}^{n}\left(t-s\right)dB_{s}^{h}\Longrightarrow Y$,
where $\Longrightarrow$ denotes convergence in finite dimensional
distributions, which gives the desired result. Now, observe that (\ref{eqnchap13.1})
implies that for any $f:\mathbb{R}^{2\times M}\mathbb{\rightarrow R}$
continuous function and $\left(r_{1},\ldots,r_{M}\right)\in\mathbb{R}^{M}$
\begin{equation}
f\left(\left\{ \left(\sigma_{r_{i}+h},B_{r_{i}}^{h}\right)_{i=1}^{M}\right\} \right)\overset{d}{=}f\left(\left\{ \left(\sigma_{r_{i}},B_{r_{i}}\right)_{i=1}^{M}\right\} \right)\text{.}\label{eqnchap12}
\end{equation}
In view of 
\begin{eqnarray*}
\int_{-\infty}^{t}\psi_{h}^{n}\left(t-s\right)dB_{s}^{h} & = & \sum\limits _{k\in\mathbb{Z}}\sigma_{\frac{k}{2^{n}}+h}\int_{-\infty}^{t}\phi^{n}\left(t-s\right)\mathbf{1}_{\left\{ \frac{k}{2^{n}}<s\leq\frac{k+1}{2^{n}}\right\} }dB_{s}^{h}\\
 & = & \sum\limits _{k\in\mathbb{Z}}\sum\limits _{j=1}^{m_{n}}b_{i}^{n}\sigma_{\frac{k}{2^{n}}+h}\int_{-\infty}^{t}\mathbf{1}_{\left\{ \frac{k}{2^{n}}\vee t-q_{i}<s\leq\frac{k+1}{2^{n}}\wedge t-p_{i}\right\} }dB_{s}^{h}\\
 & = & \sum\limits _{k\in\mathbb{Z}}\sum\limits _{j=1}^{m_{n}}b_{i}^{n}\sigma_{\frac{k}{2^{n}}+h}\left(B_{\left(\frac{k+1}{2^{n}}\wedge t-p_{i}\right)+h}-B_{\left(\frac{k}{2^{n}}\vee t-q_{i}\right)+h}\right),
\end{eqnarray*}
and (\ref{eqnchap12}), we have that for any $\left(c_{1},\ldots,c_{M}\right)$
\begin{eqnarray*}
\sum\limits _{j=1}^{M}c_{j}\int_{-\infty}^{r_{j}}\psi_{h}^{n}\left(r_{j}-s\right)dB_{s}^{h} & = & \sum\limits _{k\in\mathbb{Z}}\left(\sum\limits _{j=1}^{M}\sum\limits _{i=1}^{m_{n}}c_{j}b_{i}^{n}\right)\sigma_{\frac{k}{2^{n}}+h}\\
 &  & \times\left(B_{\left(\frac{k+1}{2^{n}}\wedge t-p_{i}\right)+h}-B_{\left(\frac{k}{2^{n}}\vee t-q_{i}\right)+h}\right)\\
 &  & \overset{d}{=}\sum\limits _{k\in\mathbb{Z}}\left(\sum\limits _{j=1}^{M}\sum\limits _{i=1}^{m_{n}}c_{j}b_{i}^{n}\right)\sigma_{\frac{k}{2^{n}}}\\
 &  & \times\left(B_{\frac{k+1}{2^{n}}\wedge t-p_{i}}-B_{\frac{k}{2^{n}}\vee t-q_{i}}\right)\\
 & = & \sum\limits _{j=1}^{M}c_{j}\int_{-\infty}^{r_{j}}\psi_{0}^{n}\left(r_{j}-s\right)dB_{s}\text{,}
\end{eqnarray*}
the desired result. \end{proof}

\bibliographystyle{chicago}
\bibliography{bibliographyBSSandrelated}

\end{document}